\newtheorem{theorem}{Theorem}[section]
\newtheorem{lemma}{Lemma}[section]
\newtheorem{proposition}{Proposition}[section]
\theoremstyle{remark}
\newtheorem{definition}{Definition}
\newtheorem{conj}{Conjecture}[section]
\numberwithin{equation}{section}
\newcommand{\N}{\mbox{$\mathbb{N}$}}
\newcommand{\Q}{\mbox{$\mathbb{Q}$}}
\newcommand{\F}{\mbox{$\mathbb{F}$}}
\begin{document}
\title{Iterates of polynomials over 
$\F_q(t)$ \\ and their Galois groups}
\author{Sushma Palimar}
\address{ 
Department of Mathematics,\\ 
Indian Institute of  Science,\\
Bangalore, Karnataka, India.}
\email{ sushmapalimar@gmail.com}
\subjclass[2020]{37P15; 11R32, 11T55,12F10,12F20,}
\keywords{Galois group; Odoni's conjecture; rational function field;  finite fields.}
\begin{abstract}
A conjecture of Odoni stated 
over  Hilbertian fields $K$ of characteristic zero asserts that
for every positive integer $d$, there exists a polynomial $f\in K[x]$ of degree $d$ such that for every positive integer $n$, each  iterate $f^{\circ n}$ of $f$ is irreducible 
and the Galois group of the
splitting field of $f^{\circ n}$
is isomorphic to $[S_d]^{n}$, 
the $n$ folded iterated  wreath product of the symmetric group $S_{d}$. 
We prove an analogue this conjecture over $\F_q(t)$, the field of rational functions in $t$ over a  finite field
$\F_q$ of odd cardinality. 
We present some  examples and see that most polynomials in $\F_q[t][x]$ satisfy these conditions.
\end{abstract}
\maketitle
\section{Introduction}
 Let  $K$ be a Hilbertian field and
$\phi(x)\in K[x]$ be  a polynomial of degree $d>1$. For a fixed $n\in \N, $ let $\phi^{\circ n}=\phi\circ\dots \circ \phi$, ($n$ times)
denote the $n$-th iterate of $\phi$.
Assume that  $\phi^{\circ n}$ is separable for all $n\geq 1.$
For $t\in K$,  consider the Galois group of polynomials of the form
 $\phi^{\circ n}(x)-t$ over $K$.
 Such groups are called arboreal Galois groups. Given such a polynomial, its arboreal Galois representation is surjective if and only if its splitting field is an 
$S_d$ extension.
 The study of the arboreal Galois groups of these field extension over fields of characteristic $0$ is started by R.W.K. Odoni \cite{Odo}. 
 An explicit source of arboreal
Galois representations is given by the Galois action on the roots of the
iterates of a given polynomial.
Let $K_n$ denote the splitting field of $\phi^{\circ n}(x)-t$ over $K$. 
The $d^n$ distinct roots of the $n$th iterate  of $\phi$ can be identified with $d^{n}$ vertices
at level $n$ of the $d$-ary rooted tree $T$ in such a way that Galois group of $\phi^{\circ n}$ embeds in $\mathrm{Aut}(T_n)$, where $\mathrm{Aut}(T_n)$ is the group of automorphism of the $d$-ary rooted  tree of height $n$. 
A standard result in group theory is that, $\mathrm{Aut}(T_n)\cong [S_d]^{n}$, the $n$-folded iterated
wreath product of the symmetric group 
$S_{d}$. For a fixed $n\in \N,$  
the Galois
extension $K_n/K$ is maximal when $\mathrm{Gal}(K_n/K) = \mathrm{Aut}(T_n)$.
We call the relative Galois extension $K_{n}/K_{n-1}$ is maximal when 
$\mathrm{Gal}(K_{n}/K_{n-1})=S_d^{d^{n-1}}$.
Then it follows by induction and comparison of degrees that $\mathrm{Gal}(K_n/K)\cong [S_d]^{n}$ 
for all $n\in \N$, (Jones, \cite{rj}).
A Hilbertian field is a field $K$ which satisfies Hilbert irreducibility Theorem (HIT).
Examples of Hilbertian fields include $\Q$,
number fields, and finite extensions of $K(t)$ for any field $K$. In general for any field $K$ and an indeterminate  $u$ 
  it is known that
$K(u)$ is Hilbertian.
\begin{theorem}\label{HIT}
(HIT) Let $k$ be a Hilbertian field and $u_1,\dots, u_r, x$ be independent indeterminates over $k$. Suppose that 
$f(u_1,\dots, u_r, x)\in k[u_1,\dots, u_r, x]$ be any monic, irreducible polynomial  of degree $n>0$ in $x$ and separable over $k(u_1,\dots, u_r)$. Then there is a Hilbert subset $H_t=(U_1,\dots,U_r)\subset k^{r}$ such that 
$\mathrm{Gal}(f(u_1,\dots, u_r, x), k(u_1,\dots, u_r ))\cong 
\mathrm{Gal}(f(U_1,\dots, U_r, x), k)\cong S_n.$
\end{theorem}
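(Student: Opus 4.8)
The plan is to reduce the Galois-group statement to an irreducibility statement for a single auxiliary polynomial --- the defining polynomial of the Galois closure --- and then to invoke the defining property of a Hilbertian field. First I would recall that a \emph{Hilbert subset} of $k^r$ is the set of $(U_1,\dots,U_r)$ at which finitely many polynomials, each irreducible in $x$ over $k(u_1,\dots,u_r)$, remain irreducible over $k$, and that $k$ being Hilbertian means every such set is nonempty. Since a rational function field over any field is Hilbertian, the $r$-variable irreducibility statement follows from the one-variable one by specializing $u_1,\dots,u_{r-1}$ successively --- at each stage choosing the value in a one-variable Hilbert subset, working over the Hilbertian field $k(u_1,\dots,u_{r-1})$ --- so I may take for granted that Hilbert subsets of $k^r$ are nonempty. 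Write $\mathbf u=(u_1,\dots,u_r)$ and $G=\mathrm{Gal}(f(\mathbf u,x),k(\mathbf u))\le S_n$; in the situation of the theorem $G\cong S_n$.

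The next step is to pass to the Galois closure. Let $L$ be the splitting field of $f(\mathbf u,x)$ over $k(\mathbf u)$; by the primitive element theorem choose $\theta\in L$ integral over $k[\mathbf u]$ with $L=k(\mathbf u)(\theta)$, and let $h(\mathbf u,y)\in k[\mathbf u,y]$ be its minimal polynomial, which is monic in $y$, irreducible over $k(\mathbf u)$ of degree $|G|$, separable, and --- since $L/k(\mathbf u)$ is Galois --- splits completely in $L$. Both $\mathrm{disc}_x f$ and $\mathrm{disc}_y h$ are nonzero elements of $k[\mathbf u]$. I would then take the set of $\mathbf U\in k^r$ at which $h(\mathbf U,y)$ is irreducible over $k$, intersected with the complement of the proper Zariski-closed set where $\mathrm{disc}_x f$ or $\mathrm{disc}_y h$ vanishes; since a Hilbert subset with a proper closed subset removed still contains a Hilbert subset, the result is again a Hilbert subset $H$, hence nonempty, and for $\mathbf U\in H$ both $f(\mathbf U,x)$ and $h(\mathbf U,y)$ are separable over $k$.

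Now fix $\mathbf U\in H$ and run the Dedekind-style specialization argument: extend $k[\mathbf u]\to k$, $\mathbf u\mapsto\mathbf U$, to the integral closure of $k[\mathbf u]$ in $L$ and choose a prime above it; the corresponding decomposition group is a subgroup $D\le G$, and because $\mathrm{disc}_x f$ does not vanish at $\mathbf U$ the $n$ roots of $f$ reduce to the $n$ distinct roots of $f(\mathbf U,x)$, yielding an isomorphism $D\cong\mathrm{Gal}(f(\mathbf U,x),k)$ compatible with the actions on roots; in particular $|\mathrm{Gal}(f(\mathbf U,x),k)|\le|G|$. Conversely $\theta$ lies in the splitting field of $f$, so its reduction $\bar\theta$ lies in the splitting field of $f(\mathbf U,x)$ over $k$; as $h(\mathbf U,y)$ is irreducible of degree $|G|$ we get $[k(\bar\theta):k]=|G|$, forcing $|\mathrm{Gal}(f(\mathbf U,x),k)|\ge|G|$. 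Hence $\mathrm{Gal}(f(\mathbf U,x),k)\cong G$, which is $S_n$ in the setting of the theorem, and transitivity of $G$ on the roots shows $f(\mathbf U,x)$ is irreducible over $k$; so $H$ is the required Hilbert subset.

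The step I expect to be the real obstacle is the construction of the decomposition group for a good specialization and the verification that it recovers $\mathrm{Gal}(f(\mathbf U,x),k)$ faithfully inside $G$. That, together with the two standard supporting facts --- a Hilbert subset minus a proper Zariski-closed set still contains a Hilbert subset, and $k(u_1,\dots,u_{r-1})$ is Hilbertian so the reduction to one variable is legitimate --- are the essential inputs; the remainder is bookkeeping with field degrees.
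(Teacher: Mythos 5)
The paper gives no proof of Theorem \ref{HIT}: it is quoted as the classical Hilbert irreducibility theorem for Hilbertian fields (the standard reference being Fried--Jarden \cite{jm}), so there is no argument in the paper to compare yours against. Your sketch is essentially the standard textbook proof of the Galois-theoretic form of HIT: pass to the Galois closure via a primitive element $\theta$ integral over $k[u_1,\dots,u_r]$, require that its minimal polynomial $h$ remain irreducible at the specialization and that $\mathrm{disc}_x f$ and $\mathrm{disc}_y h$ not vanish there (a Hilbert subset intersected with a nonempty Zariski-open set still contains a Hilbert subset), and then squeeze: the Dedekind/decomposition-group argument gives $|\mathrm{Gal}(f(U_1,\dots,U_r,x),k)|\le |G|$, while irreducibility of $h(U_1,\dots,U_r,y)$ gives $[k(\bar\theta):k]=|G|$ and hence the reverse inequality; transitivity then yields irreducibility of the specialized polynomial. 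The reduction of the several-variable case to one variable over the Hilbertian field $k(u_1,\dots,u_{r-1})$ is likewise standard. This is a correct route to the result the paper actually needs.

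Two points deserve attention. First, the ``$\cong S_n$'' in the statement is not a consequence of the stated hypotheses (monic, irreducible, separable): the genuine content of HIT is $\mathrm{Gal}(f(U_1,\dots,U_r,x),k)\cong\mathrm{Gal}(f(u_1,\dots,u_r,x),k(u_1,\dots,u_r))$ for all specializations in a Hilbert subset, and the symmetric group appears only when the generic group is already $S_n$; you treat this correctly as an additional hypothesis, which in the paper's applications is supplied by results of the type of Theorem \ref{[S_n]}. Second, your step ``$\theta$ lies in the splitting field of $f$, so $\bar\theta$ lies in the splitting field of $f(U_1,\dots,U_r,x)$'' needs a word of justification: choose $\theta$ as a $k[u_1,\dots,u_r]$-linear combination of the roots of $f$ (or shrink the Zariski-open set to avoid the poles of the coefficients expressing $\theta$ in the roots), so that reduction carries this expression to a polynomial in the roots of the specialized polynomial; with that routine fix the degree count closes as you say.
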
 
Let $K$ be a field, for a separable polynomial $f \in K[X]$
of degree $d>0$ we define its Galois group 
$\mathrm{Gal}(f,K)$ to be the Galois group of its splitting
field over $K$. We will always interpret the Galois group $\mathrm{Gal}(f,K)$ as a subgroup of the
symmetric group $S_d$ via its action on the roots of $f$. The embedding into $S_d$ is well defined
up to conjugation.
\begin{definition}Wreath product of groups:\cite{jl}. Let $G$ and $H$ be $2$ permutation  groups and $r$ be the degree of $G$. The wreath product of $G$ by $H$, denoted by $G[H]$ or $H\wr_{r} G=H^{r}\rtimes G$ is the semidirect product of the Cartesian product of $r$ copies of $H$ by $G$.
The $n$th wreath power of a group $G$ is defined recursively as $[G]^{1}=G$ and $[G]^{n}=[G]^{n-1}[G]$.
\end{definition}
\begin{definition}
 Suppose $k$ is any field, for any $n\in \N$, let $f^{\circ n}(x)$ be the 
$n$-th iterate of $f(x)\in k[x]$. 
 An element $\alpha\in k$ is said to be wandering if,  $f^{\circ l}(\alpha)\neq f^{\circ m}(\alpha)$ for all $l>m>0$.
 We say that $\mathfrak{p}$ is a primitive prime divisor of $f^{\circ n}(\alpha)$ if 
 $v_{\mathfrak{p}}(f^{\circ n}(\alpha))>0$ and $v_{\mathfrak{p}}(f^{\circ m}(\alpha))\leq 0$ for all $m<n.$
\end{definition}
\begin{conj}\label{conj-odo} [Conjecture 7.5, Odoni, \cite{Odo}]
 For any Hilbertian field $\F$ of characteristic $0$, there exists a monic polynomial $f(x)\in \F[x]$
 of degree $d\ge 2$ such that the Galois group of the $n$-th iterate of 
 $f(t,x)$ over $\F$ is,
 \begin{equation}\label{co-odo}
\mathrm{Gal}(f^{\circ n}(x),\F)\cong [S_d]^{n}, n\geq 1
\end{equation}
\end{conj}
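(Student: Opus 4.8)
The plan is to prove the existence over the larger Hilbertian field $\F(t)$ first and then descend to $\F$ by Hilbert's irreducibility theorem. Passing to $\F(t)$ is advantageous because, unlike an arbitrary Hilbertian field, it carries an abundant supply of discrete valuations $v_{\mathfrak p}$, one for each monic irreducible $\mathfrak p\in\F[t]$, which is precisely the arithmetic needed to run the primitive prime divisor machinery recorded in the introduction. Thus the first goal is to produce a monic $f(t,x)\in\F[t][x]$ of degree $d\ge 2$ in $x$ with $\mathrm{Gal}(f^{\circ n}(x),\F(t))\cong[S_d]^{n}$ for every $n\ge 1$, and the second goal is to specialize $t\mapsto\tau\in\F$ so as to preserve all of these groups at once.

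For the first goal I would induct on $n$ using the principle recalled in the introduction (Jones, \cite{rj}): $\mathrm{Gal}(K_n/\F(t))\cong[S_d]^{n}$ for all $n$ exactly when each relative extension $K_n/K_{n-1}$ is maximal, i.e. $\mathrm{Gal}(K_n/K_{n-1})\cong S_d^{\,d^{n-1}}$. If $\beta_1,\dots,\beta_{d^{n-1}}$ are the roots of $f^{\circ(n-1)}$, then $K_n$ is the compositum over $K_{n-1}$ of the splitting fields of the $d^{n-1}$ polynomials $f(x)-\beta_i$, so relative maximality decomposes into (i) each $f(x)-\beta_i$ is irreducible over $K_{n-1}$ with Galois group $S_d$, and (ii) these $d^{n-1}$ degree-$d$ extensions are linearly disjoint over $K_{n-1}$. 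Part (i) follows from the irreducibility of the iterates together with the genericity of $f$, while part (ii) is the arboreal heart of the matter and is measured by the discriminant class of $f^{\circ n}$ in $K_{n-1}^{*}/(K_{n-1}^{*})^{2}$ when $d=2$ and by finer ramification data in general.

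The engine for (ii) is the primitive prime divisor. Let $\gamma$ be a critical point of $f$ and consider its forward orbit $a_n=f^{\circ n}(\gamma)$, which I would arrange to be wandering by choosing $f$ post-critically infinite. Because $\deg_t a_n$ grows geometrically, roughly like $d^{n}$ times the $t$-degree of the coefficients, a Zsigmondy-type degree count in the unique factorization domain $\F[t]$ forces, for every $n$, a monic irreducible $\mathfrak p_n$ with $v_{\mathfrak p_n}(a_n)>0$ but $v_{\mathfrak p_n}(a_m)\le 0$ for all $m<n$; that is, $a_n$ has a primitive prime divisor. Up to squares and lower-level contributions, the new factor of the discriminant of $f^{\circ n}$ relative to $K_{n-1}$ is represented by $a_n$, so $\mathfrak p_n$ supplies a square class (for $d=2$) or a fresh ramification invariant (in general) that is genuinely new at level $n$. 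For $d=2$ this is exactly Stoll's condition: the relative group $\mathrm{Gal}(K_n/K_{n-1})\subseteq(\Z/2)^{\,d^{n-1}}$ is forced to be everything by the permutation-module structure of $[S_2]^{n-1}$ once a single new square class appears, and relative maximality then follows at every level.

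Finally descend by HIT. For each $n$, $f^{\circ n}(t,x)\in\F[t][x]$ is monic and separable in $x$ with $\mathrm{Gal}(f^{\circ n}(x),\F(t))\cong[S_d]^{n}$, so by Theorem \ref{HIT} there is a Hilbert subset $H_n\subseteq\F$ of specializations $t\mapsto\tau$ preserving this group; here one uses that specialization commutes with iteration, $(f(\tau,\cdot))^{\circ n}=f^{\circ n}(\tau,\cdot)$. Since $\F$ is Hilbertian, the countable intersection $\bigcap_{n\ge 1}H_n$ is nonempty, and any $\tau$ in it yields $f_\tau(x)=f(\tau,x)\in\F[x]$ with $\mathrm{Gal}(f_\tau^{\circ n}(x),\F)\cong[S_d]^{n}$ for all $n$, proving the conjecture. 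I expect the main obstacle to be the general-$d$ form of step (ii): when $d>2$ the relative group $S_d^{\,d^{n-1}}$ is nonabelian, so a single new ramified prime no longer suffices on its own. One must show the inertia at $\mathfrak p_n$ is generated by a $d$-cycle of the correct tame shape, combine it with a transposition from a second carefully chosen prime to generate a full $S_d$ in the new layer, and verify linear disjointness across the $d^{n-1}$ factors so that nothing is absorbed by the lower levels. A secondary technical point is the nonemptiness of the countable intersection of Hilbert subsets, which holds for Hilbertian fields.
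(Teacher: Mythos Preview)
The paper does not prove Conjecture~\ref{conj-odo}. It is stated as a conjecture, followed by a summary of partial results due to Odoni, Juul, Looper, Kadets, and Specter; the paper's own contribution is an \emph{analogue} over $\F_q(t)$ (Theorems~\ref{[S_n]} and~\ref{S_n:ff}), not the characteristic-zero statement you are attempting. So there is no ``paper's own proof'' to compare your proposal against.

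On the merits of your proposal: the strategy of proving maximality over $\F(t)$ and then specializing is exactly what Odoni did in \cite{Odo}, and the first half of your outline (relative maximality via primitive prime divisors of the critical orbit) is close in spirit to the paper's argument over $\F_q(t)$. The fatal gap is in the descent step. You assert that ``the countable intersection $\bigcap_{n\ge 1}H_n$ is nonempty\ldots which holds for Hilbertian fields,'' but this is not a property of arbitrary Hilbertian fields. Hilbertianity guarantees only that each $H_n$, and each \emph{finite} intersection of them, is nonempty; it says nothing about countable intersections. Over $\Q$ or a number field one can rescue this with a height-counting argument showing thin sets have density zero, so a countable union of thin sets cannot exhaust the field, but no such mechanism is available over a general Hilbertian field of characteristic~$0$. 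This is precisely why the conjecture remained open for three decades after Odoni proved the generic case: if countable HIT worked, Odoni would have finished the job in 1985. The proofs of Looper, Kadets, and Specter cited in the paper avoid this obstacle by constructing explicit polynomials over $\F$ itself and verifying maximality directly via arithmetic of the critical orbit in $\F$, not in $\F(t)$.

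A smaller issue: for $d>2$ your sketch of step~(ii) is underspecified. You correctly flag that a single new ramified prime no longer suffices, but producing both a $d$-cycle and a transposition at each level, and checking that the resulting $S_d$'s are genuinely independent across all $d^{n-1}$ factors, requires a careful inertia computation that you have not supplied.
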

Odoni proved his conjecture 
 for  $\phi(x)=x^{2}-x+1$  over a field $K$ of characteristic $0$ and showed that this polynomial has surjective arboreal representation (\cite{Odo}).  
 Conjecture \ref{conj-odo}  for generic polynomials  over an algebraically closed field $\F$ of characteristic $p>0$ was proved by Juul \cite{jl}. 
 While, Looper, \cite{nc} established this result over the field $\F=\Q$, with all prime degrees $d$.
Looper's result was independently extended in [\cite{bk},\cite{js1}].
 Kadets \cite{bk}, 
showed that Odoni's conjecture holds over any number field. 
Specter \cite{js1} proved Odoni's conjecture for all number fields,  more generally, for all algebraic extensions $K/\Q$ that are
unramified outside of a finite set of primes.
\section{Galois group of polynomials over  ${\F}_q(t).$} 
Odoni's conjecture \ref{conj-odo}, asserts that the Galois group of the field extension formed
by adjoining the roots of the $n$-th iterate is as large as possible.
We prove an analogue of this Conjecture over $\F_q(t)$. 
Suppose
$f(t,x)$ in $\F_q[t][x]$ 
is 
an irreducible, separable polynomial over $\F_q(t)$, which is  monic in $x$ of degree $d>2$   such that $\mathrm{char}\F_q=p\neq 2$, $p\nmid d(d-1)$. 
 Let $L$ be the splitting field of $f(t,x)$ over  $\F_q(t)$ and, 
 $k=\overline{\F}_q$  be an algebraic closure of $\F_q$ in some algebraic closure $ \Omega\supset L$ of $\F_q(t)$. 
Suppose, 
$\F$ is the splitting field of $f(t,x)$ over $k(t)$ in $\Omega$.
    We know that,
\begin{equation}\label{geom}
 \mathrm{Gal}(f(t,x),k(t)) =\mathrm{Gal}(L\cdot k/k(t))\leq \mathrm{Gal}(f(t,x),\F_q(t)) \end{equation}
 i.e., by passing to an algebraic closure of $\F_q$, Galois groups can
only decrease in size. 
So we may replace $\F_q$ with an algebraic closure $k$ of $\F_q$ of characteristic $p>2$ and prove
the result in this case. 
\subsection{Results}
\begin{theorem}\label{[S_n]}
Let $f(t,x)\in k[t][x]$ be a monic, irreducible  polynomial of degree $d>2$ in $x$, separable over $k(t)$ such that $p\neq 2$ and $p\nmid d(d-1)$. Then the Galois group $\mathrm{Gal}(f(t,x),k(t))\cong S_d,$ the symmetric group $S_d.$
  \end{theorem}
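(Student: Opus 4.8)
The plan is to compute $\mathrm{Gal}(f(t,x),k(t))$ as the geometric monodromy group of a branched cover of the projective line and to show that it is generated by transpositions. Write $G=\mathrm{Gal}(f,k(t))$, viewed inside $S_d$ via its action on the $d$ roots of $f$; since $f$ is irreducible over $k(t)$, the group $G$ is transitive. Let $\Delta(t)=\mathrm{disc}_x f(t,x)\in k[t]$ and let $S\subset\mathbb{P}^1_k$ be the finite set consisting of the zeros of $\Delta$ together with $t=\infty$. The splitting field of $f$ is the function field of a connected normal $G$-cover of $\mathbb{P}^1_k$ unramified away from $S$, and since $\mathbb{P}^1_k$ has no nontrivial connected unramified cover (Riemann--Hurwitz over the algebraically closed field $k$ gives negative genus), $G$ is generated by the inertia subgroups at the places over $S$. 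Under the hypotheses $p\neq 2$ and $p\nmid d(d-1)$ this ramification is tame, so each inertia subgroup is cyclic, and by the structure of the tame fundamental group of $\mathbb{P}^1_k\setminus S$ one may choose generators $\gamma_{\mathfrak p}$ of these inertia groups satisfying a single relation $\prod_{\mathfrak p}\gamma_{\mathfrak p}=1$.

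The crux is the local picture at a finite branch point $t_0$, i.e.\ a zero of $\Delta$. The essential claim is that the branch locus is \emph{simple}: $f(t_0,x)$ has exactly one double root and $d-2$ simple roots. Granting this, Hensel's lemma over the completion $k((t-t_0))$ (after an unramified base change that isolates the $d-2$ simple roots) factors $f$ as a product of $d-2$ linear polynomials and one monic quadratic $g(t,x)$ whose discriminant vanishes to order one at $t_0$; since $p\neq 2$, completing the square shows $g$ defines a totally and tamely ramified quadratic extension. Hence the inertia at a place over $t_0$ is generated by the transposition that interchanges the two colliding roots and fixes the remaining $d-2$, so every finite branch point contributes a transposition to $G$.

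It remains to handle $t=\infty$ and assemble the conclusion. Substituting $t=1/s$ and inspecting the Newton polygon of $f$ in $x$ over $k((s))$, the condition $p\nmid d$ makes the ramification over $\infty$ tame with cyclic inertia $\langle\sigma_\infty\rangle$; by the product-one relation, $\sigma_\infty$ is the inverse of a product of the finite-branch transpositions and so already lies in the subgroup they generate. Thus $G$ is generated by transpositions. Finally I would invoke the elementary fact that a transitive subgroup of $S_d$ generated by transpositions equals $S_d$: the graph on $\{1,\dots,d\}$ whose edges are those transpositions has each of its connected components preserved by $G$, so transitivity forces the graph to be connected, and the transpositions along any spanning tree already generate $S_d$. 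This gives $\mathrm{Gal}(f,k(t))\cong S_d$.

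The step I expect to be the main obstacle is the simplicity of the finite branch locus used in the second paragraph -- equivalently, that $\Delta(t)$ is squarefree in $k[t]$, so that each branch cycle is a genuine transposition rather than a longer cycle or a product of disjoint transpositions. This is precisely where $p\nmid d(d-1)$ is exploited: for a trinomial model such as $f(t,x)=x^d-x+t$ one has $\Delta(t)=\pm\bigl(d^{\,d}t^{\,d-1}\mp(d-1)^{\,d-1}\bigr)$, which is separable exactly when $p\nmid d(d-1)$, and this is also the genericity that underlies the ``most polynomials'' assertion, since having a non-squarefree discriminant is a codimension-one condition on the coefficients. One must also make sure that $\infty$ is the only place at which inertia can fail to be a transposition (dealt with above through the product relation); were there several such places, one would instead need the separate verification that the subgroup generated by their cyclic inertia groups together with all the transpositions is still the full symmetric group.
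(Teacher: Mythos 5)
Your strategy is essentially the paper's: realize $G=\mathrm{Gal}(f,k(t))$ as a transitive subgroup of $S_d$, argue that the inertia groups at the finite branch points are generated by transpositions, and finish with the fact that a transitive subgroup of $S_d$ generated by transpositions (in the paper: a primitive subgroup containing a transposition, via the ``throwing away roots'' double-transitivity step) is all of $S_d$. Your treatment of $t=\infty$ through the product-one relation in the tame fundamental group is actually tidier than the paper's flat assertion that $e(Q_\infty|P_\infty)=d$, but these differences are cosmetic.

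The genuine problem is the step you yourself flag as the main obstacle: the simplicity of the finite branch locus, equivalently the squarefreeness of $\Delta_f(t)=\mathrm{disc}_x f(t,x)$. You do not prove it, and it cannot be proved from the stated hypotheses. Take $f(t,x)=x^3-t^2$ with $p>3$ (more generally $x^d-t^{d-1}$): it is monic in $x$, irreducible and separable over $k(t)$, of degree $d=3>2$, and $p\nmid d(d-1)$; yet its splitting field is $k(s)$ with $s^3=t$ (the roots are $\zeta_3^i s^2$, and $\zeta_3\in k$), so the Galois group is cyclic of order $3$, i.e.\ $A_3\neq S_3$. Here $\Delta_f=-27t^4$ is not squarefree, and the inertia groups at $t=0$ and $t=\infty$ are generated by $3$-cycles, not transpositions. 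The paper's own proof passes over the same point by claiming that separability of $f$ over $k(t)$ forces every finite ramified place to have ramification index less than $3$; but separability only says the roots of $f$ in $\overline{k(t)}$ are distinct, not that at most two roots of a specialization $f(c,x)$ can collide, so that deduction is invalid and the example above shows the conclusion genuinely requires an extra hypothesis (for instance that $f(t,x)$ is Morse in $x$, as in Serre's Theorem 4.4.5 invoked later in the paper, or that $\Delta_f$ is squarefree together with control of the inertia at infinity). So you have located the crux correctly, but as written your argument is incomplete at exactly that point, and no argument from the hypotheses as stated can close it.
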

   For a fixed $n\in \N$, we denote the $n$-th iterate of $f(t,x)$ as, 
 \begin{displaymath}
f^{\circ n} (t,x):=f^{\circ n-1} (f(t,x)),\quad n\geq 1. 
\end{displaymath}
 and the splitting field of $f^{\circ n}(t,x)$ is denoted as $K_n$.
\begin{theorem}\label{S_n:ff}
 Let $f(t,x)\in k[t][x]$ be a monic, irreducible polynomial of degree $d>2$ in $x$, separable over $k(t)$, such that $p\neq 2$, $p\nmid d(d-1)$ and the group $\mathrm{Gal}(f(t,x),k(t))$ is isomorphic to $ S_d$.  Suppose that $f(t,x)$ has some critical point $a$ of multiplicity one in the algebraic closure $\Omega$ of
$k(t)$ such that
 $f^{\circ l}(t,a)\neq f^{\circ m}(t,b)$ for all $1\leq l\leq m\leq n\in \N$ unless $l=m$ and $a=b$. 
 Under these conditions for all $n\geq 1$,
 $\mathrm{Gal}(f^{\circ n}(t,x),k(t))\cong [S_d]^{n}$.
 \end{theorem}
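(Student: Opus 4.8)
The plan is to prove this by induction on $n$, the base case $n=1$ being Theorem~\ref{[S_n]}. So assume $\mathrm{Gal}(K_{n-1}/k(t))\cong[S_d]^{n-1}$. Identifying the $d^n$ roots of $f^{\circ n}(t,x)$ with the level-$n$ vertices of the $d$-ary tree $T$, we have $\mathrm{Gal}(K_n/k(t))\hookrightarrow\mathrm{Aut}(T_n)\cong[S_d]^n$, and this group surjects onto $\mathrm{Gal}(K_{n-1}/k(t))=[S_d]^{n-1}=\mathrm{Aut}(T_{n-1})$. Since the kernel of $\mathrm{Aut}(T_n)\to\mathrm{Aut}(T_{n-1})$ is $S_d^{d^{n-1}}$ and $|[S_d]^n|=|[S_d]^{n-1}|\cdot(d!)^{d^{n-1}}$, it is enough to prove that the relative extension is as large as possible, namely $\mathrm{Gal}(K_n/K_{n-1})\cong S_d^{d^{n-1}}$; comparison of degrees then forces $\mathrm{Gal}(K_n/k(t))\cong[S_d]^n$, exactly as in the induction recalled in the introduction after Jones \cite{rj}.

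To set up the relative step, write $f^{\circ(n-1)}(t,y)=\prod_{i=1}^{d^{n-1}}(y-\beta_i)$ with $\beta_i\in K_{n-1}$; then $f^{\circ n}(t,x)=\prod_i(f(t,x)-\beta_i)$ and $K_n$ is the compositum over $i$ of the splitting fields $M_i$ of $f(t,x)-\beta_i$ over $K_{n-1}$. Hence $\mathrm{Gal}(K_n/K_{n-1})$ is a subgroup of $\prod_i\mathrm{Gal}(M_i/K_{n-1})\subseteq S_d^{d^{n-1}}$, it is normal in $\mathrm{Gal}(K_n/k(t))$, and — because that group maps onto $[S_d]^{n-1}$ and the $\beta_i$ form a single Galois orbit — it is stable under a group of symmetries of $S_d^{d^{n-1}}$ permuting the $d^{n-1}$ factors transitively. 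A Goursat-type description of the subdirect products of $S_d^{d^{n-1}}$ ($d>2$) that are stable under such a transitive factor-permutation shows that any one of them equals the whole product as soon as (a) it surjects onto each factor $S_d$ and (b) it contains a single coordinate transposition $(\tau,1,\dots,1)$. So I would reduce the theorem to establishing (a) and (b).

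The engine for both is a discriminant computation combined with the production of a genuinely new ramified place. Differentiating $f^{\circ n}=f^{\circ(n-1)}\circ f$ in $x$ and forming resultants with $f^{\circ n}$ gives, up to a nonzero constant and a square in $k(t)$, that $\mathrm{disc}_x f^{\circ n}(t,x)$ equals $\bigl(\mathrm{disc}_x f^{\circ(n-1)}(t,x)\bigr)^{d}$ times $\prod_{c:\,f'(t,c)=0}f^{\circ n}(t,c)$; consequently the places of $k(t)$ ramifying in $K_n/k(t)$ but not already in $K_{n-1}/k(t)$ are precisely the primitive prime divisors of the critical values $f^{\circ n}(t,c)$. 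The hypotheses on $a$ — a critical point of multiplicity one whose orbit satisfies $f^{\circ l}(t,a)\ne f^{\circ m}(t,b)$ for $1\le l\le m\le n$ unless $l=m$, $a=b$ — say exactly that $a$ is wandering and that its forward orbit meets no other critical orbit through level $n$. Using this together with a primitive-prime-divisor estimate for the dynamical sequence $\bigl(f^{\circ m}(t,a)\bigr)_m$ over $k(t)$ (a Zsigmondy-type argument, via Mason--Stothers, much easier over a function field than over a number field), I would produce a place $\mathfrak p$ of $k(t)$ with $v_{\mathfrak p}\bigl(f^{\circ n}(t,a)\bigr)$ odd that divides neither $f^{\circ j}(t,a)$ for $j<n$ nor $\mathrm{disc}_x f^{\circ(n-1)}$. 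Because $p\nmid d(d-1)$ and $a$ has multiplicity one, the forced ramification is tame and is a single fold: at a place $\mathfrak P$ of $K_{n-1}$ over $\mathfrak p$ exactly one of the $f(t,x)-\beta_i$ — the one with $\beta_i\equiv f(t,a)$ — picks up a double root while the others stay separable, so the inertia of $\mathfrak P$ in $K_n/K_{n-1}$ is a transposition in a single factor, giving (b), and $\mathfrak P$ is unramified in every other $M_j$; the oddness of $v_{\mathfrak p}\bigl(f^{\circ n}(t,a)\bigr)$ also yields, by a standard irreducibility criterion for iterates, that $f^{\circ n}(t,x)$ is irreducible over $k(t)$, hence that each fibre $f(t,x)-\beta_i$ is irreducible over $K_{n-1}$.

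Finally I would upgrade fibre-irreducibility to (a): each $\mathrm{Gal}(M_i/K_{n-1})$ is transitive and, by (b) and normality of $\mathrm{Gal}(K_n/K_{n-1})$, contains a transposition, so by Jordan's theorem it is $S_d$ as soon as it is primitive; primitivity is automatic when $d$ is prime, and in general it follows from the irreducibility of all the iterates $f^{\circ j}(t,x)$, $1\le j\le n$, since any block system on level $n$ refining the siblings partition would descend to a common imprimitivity of the conjugate fibre groups, which the no-collision hypothesis excludes through the discriminant identity above and the base case $\mathrm{Gal}(f,k(t))\cong S_d$. With (a) and (b) the group-theoretic reduction of the second paragraph yields $\mathrm{Gal}(K_n/K_{n-1})\cong S_d^{d^{n-1}}$, and comparison of orders with $\mathrm{Gal}(K_{n-1}/k(t))\cong[S_d]^{n-1}$ closes the induction. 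I expect the real obstacle to be the construction of $\mathfrak p$ with exactly the right local shape — a single transposition in a single fibre, unramified everywhere else — for this is the one step where the orbit-disjointness hypothesis is actually converted into arboreal information; the secondary delicate point, excluding imprimitive fibre groups when $d$ is composite, is what I would expect to require the most additional care.
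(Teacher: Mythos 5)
Your overall strategy --- reduce to $\mathrm{Gal}(K_n/K_{n-1})\cong S_d^{d^{n-1}}$, then obtain this from a discriminant identity for $f^{\circ n}=f^{\circ(n-1)}\circ f$, a primitive prime divisor of the critical orbit, and a Goursat-type argument for subdirect products of $S_d^{d^{n-1}}$ normalized by a transitive permutation of the factors --- is the route of the Odoni literature (Looper, Kadets, Specter) and is genuinely different from the paper's. The paper writes $K_n$ as the compositum of the splitting fields $M_i'$ of the fibres $f(t,x)-a_i$ over $K_{n-1}$, gets $\mathrm{Gal}(M_i'/K_{n-1})\cong S_d$ by base change from $\mathrm{Gal}(f(t,x)-a_i,k(t,a_i))\cong S_d$ (each root $a_i$ of $f^{\circ(n-1)}$ being transcendental over $k$, Theorem \ref{[S_n]} applies with $a_i$ as parameter), argues that the $M_i'$ are pairwise linearly disjoint over $K_{n-1}$ because the discriminants $D_i=\mathrm{disc}_x(f(t,x)-a_i)$ are squarefree and pairwise coprime, and then invokes Lang's theorem on composita of linearly disjoint Galois extensions plus a degree count; irreducibility of $f^{\circ n}$ is handled by Capelli's Lemma, not by a valuation criterion. (Notably, the paper's \S 3 argument makes essentially no use of the critical-orbit hypothesis, whereas your sketch makes it the engine.)

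The genuine gap in your plan is condition (a), that each fibre group $\mathrm{Gal}(M_i/K_{n-1})$ is all of $S_d$. You reach transitivity and a transposition and then need primitivity, which you claim ``follows from the irreducibility of all the iterates''; that inference is invalid. Irreducibility of every iterate gives transitivity of the arboreal action at each level but puts no constraint on block systems of the degree-$d$ fibre action: for instance all iterates of $x^d-t$ (with $d$ composite, $p\nmid d$) are irreducible over $k(t)$ by Eisenstein at the prime $t$, yet by Kummer theory every fibre group is cyclic of order dividing $d$, hence imprimitive. That example fails the multiplicity-one hypothesis, but your sketch never explains how that hypothesis would enter the primitivity argument, so the step is missing rather than merely compressed; the paper avoids the issue entirely through the transcendence/base-change argument above. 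Two further points need repair: there is no ``standard irreducibility criterion for iterates'' from oddness of $v_{\mathfrak{p}}(f^{\circ n}(t,a))$ in degree $d>2$, so irreducibility must be argued separately (the paper uses Capelli together with irreducibility of $f(t,x)-a_i$ over $k(t,a_i)$); and since $a$ is only algebraic over $k(t)$, your Mason--Stothers/Zsigmondy construction of $\mathfrak{p}$ has to be run over $k(t,a)$ (or on the corresponding curve) and must simultaneously exclude $\mathfrak{p}$ from the orbits of the other critical points and from $\mathrm{disc}_x f^{\circ(n-1)}$, then be converted into the precise local statement that exactly one fibre acquires exactly one double root --- the step you yourself flag as the crux but do not carry out.
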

 We may note that, Theorems \ref{[S_n]} and  \ref{S_n:ff} hold over any algebraically closed field $k$ of characteristic $p\geq 0$ and not just over $\overline{\F}_q$.
Proof of Theorem \ref{S_n:ff} proceeds by inductive hypothesis, by showing that for a fixed $n\in \N$, the  Galois group of $K_n$  over $K_{n-1}$ is $S_{d}^{d^{n-1}}$ for all $n \geq 1.$
In \S \ref{mo_nmo},
we  consider the examples of  both Morse and non-Morse polynomials
and see that most polynomials in $\F_q[t][x]$ satisfy Odoni's conjecture.
\subsection{Proof of Theorem \ref{[S_n]}}
 By hypothesis,
$f(t,x)\in k[t][x]$ is   a monic, irreducible polynomial in $x$ of degree $d>2$, separable over $k(t)$, $\mathrm{char}(k)=p\neq 2$ and
$p\nmid d(d-1)$.  Then $\F/k(t)$  is a Galois extension and the group $\mathrm{Gal}(f(t,x),k(t))$ is a transitive subgroup of $S_d$.
\subsection{Places of  Galois extension $\F/k(t)$}\label{places}
The ramified primes of the Galois extension $\F/k(t)$ are the pole $P_{\infty}$ of $t$ and the places corresponding to the zeros of the derivative of $f(t,x)$.
The primes of  $\F_q(t)$ are given by monic, irreducible polynomials in $\F_q[t]$ and the place above $t=\infty$. 
It is known that, for $k=\overline{\F}_q$ with one exception, the primes 
 of $k(t)$ are in one-to-one correspondence with linear polynomial $P_c=t-c$ in $k[t]$ for some $c\in k$. The exceptional point is the unique pole $P_{\infty}$ of $t$, (the infinite prime of $k(t)$) corresponding to $(1/t)$-adic valuation on $k[1/t]$.
 It is useful to note
 the following lemma by Hayes.
 \begin{lemma}(Hayes)\label{hayes}
 Let $K$ be the splitting field of $x^{d}+x-t$ over $k=\F_p(t)$. Then the group $\mathrm{Gal}(x^{d}+x-t,k)$ is isomorphic to  $S_d$, the symmetric group on $d$ letters under the following conditions.
 \begin{enumerate}
 \item The prime $P_{\infty}$ ramifies in $K.\overline{\F}_p/k.\overline{\F}_p$; if $\mathrm{char}\F_p\nmid d$ then the ramification index $e(Q_\infty|P_\infty)=d$ for any prime $Q_{\infty}$ of $ K.\overline{\F}_p $ dividing $P_{\infty}$.
  \item  If $p\nmid d(d-1)$ and if $P_c\neq P_{\infty}$
 ramifies in $K.\overline{\F}_p/k.\overline{\F}_p$, then the decomposition group $D_{\mathfrak{p}}$ of any prime $\mathfrak{p}$ of $K.\overline{\F}_p$ dividing $P_c$ is cyclic of order 2. Further if $\sigma \in \mathrm{Gal}(K.\overline{\F}_p/k.\overline{\F}_p)$ generates $D_{\mathfrak{p}}$ then the permutation induced by $\sigma$ on the roots of $x^{d}+x-t$ is a transposition.
 \end{enumerate}
\end{lemma}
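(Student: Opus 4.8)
\subsection*{Proof proposal for Lemma \ref{hayes}}

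The plan is to realize the splitting field geometrically, as the Galois closure of the degree-$d$ cover $\pi\colon \mathbb{P}^1_x \to \mathbb{P}^1_t$ given by $t = x^d + x$ over $\overline{\F}_p$, and to recover the Galois group from the local inertia data at its branch points. I would first note that $x^d+x-t$ is linear in $t$, hence irreducible over $\overline{\F}_p(t)$; so the field $\overline{\F}_p(t)(\theta)$ generated by one root $\theta$ has degree $d$, and the geometric group $G := \mathrm{Gal}(K\overline{\F}_p/\overline{\F}_p(t))$ is a transitive subgroup of $S_d$ acting on the $d$ roots $\theta_1,\dots,\theta_d$. The whole argument takes place over the algebraically closed $\overline{\F}_p$, where residue fields are trivial so that decomposition and inertia groups coincide; the conclusion over $\F_p(t)$ then follows from $G \le \mathrm{Gal}(x^d+x-t,\F_p(t)) \le S_d$.

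For (1), I would localize at the pole $P_\infty$ of $t$, normalized by $v_\infty(t)=-1$, and read the valuation of a root from the Newton polygon. A root satisfies $v(\theta^d+\theta) = v(t) = -1$; a pole of $\theta$ forces $v(\theta^d)=d\,v(\theta) < v(\theta)$, so the term $\theta^d$ dominates and $d\,v(\theta)=-1$, i.e. $v(\theta) = -1/d$. Since the value group must contain $-1/d$, the ramification index is a multiple of $d$; as $[\overline{\F}_p(t)(\theta):\overline{\F}_p(t)]=d$, the prime $P_\infty$ is totally ramified in $\overline{\F}_p(t)(\theta)$, so the inertia group $I_\infty\le G$ acts transitively on the $d$ roots. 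Tameness ($p\nmid d$) makes $I_\infty$ cyclic, and a faithful transitive cyclic action on $d$ points is generated by a $d$-cycle $\sigma_\infty$; hence $e(Q_\infty|P_\infty)=|I_\infty|=d$.

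For (2), the finite branch points are the zeros of $\mathrm{disc}_x(x^d+x-t)$, equivalently the critical values $c=f(a)$ with $f'(a)=0$, where $f'(x)=dx^{d-1}+1$. Because $p\nmid d(d-1)$, the polynomial $f'$ is separable with $d-1$ distinct nonzero roots, and at each such $a$ one has $f''(a)=d(d-1)a^{d-2}\neq 0$; thus $a$ is a simple critical point and $x^d+x-c=(x-a)^2 g(x)$ with $g(a)\neq 0$. Moreover $f(a)=a^d+a = a\cdot a^{d-1} + a = \tfrac{d-1}{d}\,a$, so $a\mapsto f(a)$ is injective and the $d-1$ critical values are pairwise distinct. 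Hence over each ramified finite prime $P_c$ exactly two roots $\theta_i,\theta_j$ collide (to the double point $a$) while the remaining $d-2$ stay distinct; tameness ($p\neq 2$) forces the order-$2$ inertia to swap $\theta_i,\theta_j$ and fix the rest, and over the algebraically closed $\overline{\F}_p$ the decomposition group equals this inertia, a single transposition $\tau$.

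To finish, I would invoke that over the algebraically closed base $\overline{\F}_p$ the line $\mathbb{P}^1$ is simply connected and the cover is everywhere tame, so $G$ is generated by its inertia subgroups, with the tame product-one relation $\tau_1\cdots\tau_{d-1}\,\sigma_\infty=1$ linking the $d-1$ finite transpositions to the $d$-cycle at infinity. This exhibits $\sigma_\infty$ as a product of the $\tau_i$, so $G=\langle\tau_1,\dots,\tau_{d-1}\rangle$; a transitive subgroup of $S_d$ generated by transpositions is all of $S_d$ (their graph on the $d$ roots is connected, hence a spanning tree, and edge-transpositions of a tree generate $S_d$). Thus $G\cong S_d$, and by the sandwich above $\mathrm{Gal}(x^d+x-t,\F_p(t))\cong S_d$. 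The step I expect to be the real obstacle is precisely this local-to-global passage --- justifying that inertia generates $G$ and deploying the product-one relation in the tame genus-zero setting --- rather than the Newton-polygon and discriminant computations; one must check tameness at every branch point (guaranteed by $p\nmid d$ at infinity and $p\nmid 2(d-1)$ at the finite points) so that no wild inertia intervenes.
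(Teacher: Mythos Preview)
The paper does not give its own proof of this lemma; it is quoted as a result of Hayes and then immediately adapted to the general polynomial $f(t,x)$ in the proof of Theorem~\ref{[S_n]}. Your argument is correct and is essentially the standard one, matching what the paper sketches in that adaptation: transitivity from irreducibility, a $d$-cycle from total tame ramification at $P_\infty$, and transpositions from the simple critical points at the finite branch places, followed by the group-theoretic fact that a transitive subgroup of $S_d$ generated by transpositions is $S_d$ (the paper cites this as Serre's Lemma~4.4.4).

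Where you actually go beyond the paper is in the step you flagged as the real obstacle. The paper, in its adaptation, simply asserts that the geometric Galois group is ``a transitive subgroup of $S_d$ generated by transpositions'' without explaining \emph{why} the inertia subgroups generate the whole group. You supply this via the tame fundamental group of $\mathbb{P}^1_{\overline{\F}_p}$ and the product-one relation, which is the correct justification and is genuinely needed: as you implicitly recognize, merely knowing that $G$ is transitive and \emph{contains} a $d$-cycle and a transposition does not force $G=S_d$ (e.g.\ $\langle(1234),(13)\rangle=D_4$). Your verification that $p\nmid d$ handles tameness at infinity and $p\nmid 2(d-1)$ handles it at the finite branch points is exactly what is required to invoke that machinery. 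The Newton-polygon computation at $P_\infty$ and the Morse analysis ($f'$ separable, $f''(a)\neq 0$, injectivity of $a\mapsto f(a)=\tfrac{d-1}{d}a$) are clean and correct.
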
 
 Although Hayes proves this explicitly only to 
 find the Galois group of $x^{d}+x-t$ over $\F_p(t)$ to provide an alternate proof for the theorem of Birch and Swinnerton-Dyer, these results  can be  adapted to the general case.
  Let us find  the ramified primes in the Galois extension $\F/k(t)$.
Denote by,
\begin{displaymath}
 \Delta_{f}(t)= \mathrm{disc}_x(f(t,x)) 
 \end{displaymath} the 
 discriminant of $f(t,x)$, which is a nonzero  function of $t$.
The  primes of $k(t)$ that
ramify in the splitting field $\F$ of $f(t,x)$  are the divisors of the discriminant $\Delta_{f}(t)$ and  the infinite prime $P_{\infty}$ of $k(t)$.
The ramification index of any prime $Q_{\infty}$ of $\F$ dividing $P_{\infty}$ is $d$, i.e., 
$e(Q_{\infty}|P_{\infty})=d$. The pole $P_\infty$  has a tame  ramification and the inertia group is generated by an
$d$-cycle, and $Q_{\infty}$ is the only prime
lying above $P_{\infty}$.
Separability of $f(t,x)$ implies,
$f(t,x)$ has no multiple (double) roots, in any extension of $k(t)$, so that,
any  prime $P_c\neq P_{\infty}$ of $k(t)$,  dividing the discriminant has ramification index less than 3.
 Therefore if $\F/k(t)$ ramifies over $P_c$,  by Hensel's Lemma, $f(t,x)$ splits over the completion $K_{c}$ of $k(t)$ at $P_c=t-c$ into a quadratic factor and $(n-2)$ linear factors. Thus
the inertia group is
 generated by
transpositions, hence not contained in $A_d$, the alternating group of order $d$. 
Therefore, the group $\mathrm{Gal}(f(t,x),k(t))$ is a transitive subgroup of $S_d$ generated by transpositions, hence $\mathrm{G}=S_d$ by [Lemma 4.4.4, \cite{JP}].  Alternatively, the double transitivity property of Galois group 
can be invoked 
to prove that $\mathrm{G}=S_d$, which completes Theorem \ref{[S_n]}.
\subsection{Doubly transitive Galois groups} \label{doubly}The double transitivity of the group
$\mathrm{Gal}(f(t,x),k(t))$ 
is established by virtue of the method of ``throwing away roots'' 
by Abhyankar [\cite{ssa1} \S 4].
Let  $\alpha_1,\dots,\alpha_d$ be the $d$  distinct roots of $f(t,x)=0$ over $\F$; then,
$\F=k(t,\alpha_1,\dots,\alpha_d)$ and 
$f(t,x)$ splits into $d$ distinct linear factors $(x-\alpha_1)\dots(x-\alpha_d)$ in $\F[x]$ such that,  any two linear factors 
$(x-\alpha_i)$ and $(x-\alpha_j)$ for $i\neq j$ are pairwise  coprime in $\F[x]$. 
Eliminating a root of $f(t,x)=0$, say $x=\alpha_1$ 
gives,
\begin{equation}
 f_1(t,x)=\frac{f(t,x)}{(x-\alpha_1)}\in k(\alpha_1)[t,x]
\end{equation}
  $f_1(t,x)$ is a separable  polynomial over $k(\alpha_1)(t)$ of degree $d-1$ in $x$ and has no zero in $ k(\alpha_1)[t][x]$, therefore irreducible  in $k(\alpha_1)[t][x]$.  
Consequently $f(t,x)$ and $f_1(t,x)$ are irreducible in $k[t][x]$ and $k(\alpha_1)[t][x]$ respectively.
Thus, the stabilizer of the root 
$x=\alpha_1$ in the Galois group of $f_1(t,x)$ acts transitively on other roots,
 implying the Galois group  of $f(t,x)$ over $k(t)$ is doubly transitive. Since any doubly transitive group action is primitive, the Galois group $\mathrm{Gal}(f(t,x),k(t))$ is primitive. 
By the fact that, any primitive permutation group containing a transposition is symmetric, we have
\begin{equation}\mathrm{Gal}(f(t,x),k(t))\cong S_d
\end{equation}
This completes Theorem \ref{[S_n]}.
\section{Galois groups of iterates of Polynomials}\label{prf_th_[sn]}
Let us recall from Theorem  \ref{[S_n]} that $f(t,x)\in k[t][x]$ is a monic, irreducible, separable polynomial in  $x$ of degree $d>2$,  $k=\overline{\F}_q$, an algebraic closure  of ${\F}_q$, $\mathrm{char}(k)=p\neq 2$, $p\nmid d(d-1)$  and $\mathrm{Gal}(f(t,x),k(t))\cong S_d$.
By our notation, for a fixed $n\in \N$, the $n$-th iterate of $f(t,x)$ is denoted as $f^{\circ n}(t,x)$, defines as:
\begin{displaymath}
f^{\circ n}:=f^{\circ n-1}(f(t,x))
\end{displaymath}
and  the splitting field of $f^{\circ n}(t,x)$ over $k(t)$ is denoted as $K_n$.
Here, $f^{\circ 1}(t,x)$ is considered as $f(t,x)$ and $K_0=k(t).$
From the hypothesis of the Theorem \ref{S_n:ff}, $f(t,x)$ has some critical point
${a}$ of multiplicity one in some algebraic closure $\Omega$ of $k(t)$,   
  such that $f^{\circ l}(t,a)\neq f^{\circ m}(t,b)$ for all $l\leq m\leq n$ unless $m=l$ and $a=b$.  
  Aim of this section is to show that $K_{n}/K_{n-1}$ is maximal for all $n\geq 1.$
The reasoning relies on a  basic
fact of algebra known as,  Capelli's Lemma, which we will use many times
throughout the paper. We state it below without proof.
\begin{lemma}(Capelli's Lemma)\label{capl}
 Let $K$ be any field and let $f, g \in K[x]$.
Suppose $\alpha \in \overline {K}$ is any root of $f$. Then $f(g(x))$ is irreducible over $K$ if and
only if both $f(x)$ is irreducible over $K$ and $g(x)-\alpha$ is irreducible over $K(\alpha)$.
\end{lemma}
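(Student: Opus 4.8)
The plan is to prove both directions by a single degree-counting argument resting on the tower law for field extensions. Write $m = \deg f$ and $n = \deg g$, and assume $n \geq 1$ (otherwise the statement degenerates), so that $\deg f(g(x)) = mn$; here the leading coefficient of $f(g(x))$ is nonzero, being the product of the leading coefficient of $f$ with the $m$-th power of the leading coefficient of $g$. Throughout I would fix a root $\beta \in \overline{K}$ of $g(x) - \alpha$. Since $g(\beta) = \alpha$ and $f(\alpha) = 0$, this $\beta$ is automatically a root of $f(g(x))$, and $\alpha = g(\beta)$ lies in $K(\beta)$, giving the tower $K \subseteq K(\alpha) \subseteq K(\beta)$ that drives the whole argument.

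For the reverse implication, suppose $f$ is irreducible over $K$ and $g(x) - \alpha$ is irreducible over $K(\alpha)$. Then $f$ is, up to a scalar, the minimal polynomial of $\alpha$, so $[K(\alpha):K] = m$; likewise $g(x)-\alpha$ is the minimal polynomial of $\beta$ over $K(\alpha)$, so $[K(\beta):K(\alpha)] = n$. The tower law then gives $[K(\beta):K] = mn$. Because $\beta$ is a root of $f(g(x)) \in K[x]$, a polynomial of degree exactly $mn$, while the minimal polynomial of $\beta$ over $K$ already has degree $mn$, the polynomial $f(g(x))$ must be a scalar multiple of that minimal polynomial, hence irreducible over $K$.

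For the forward implication, assume $f(g(x))$ is irreducible over $K$. First I would note that $f$ itself must be irreducible: any nontrivial factorization $f = f_1 f_2$ in $K[x]$ would produce the nontrivial factorization $f(g(x)) = f_1(g(x)) f_2(g(x))$, contradicting irreducibility. Hence $[K(\alpha):K] = m$. Since $f(g(x))$ is irreducible of degree $mn$ and vanishes at $\beta$, it is (up to a scalar) the minimal polynomial of $\beta$ over $K$, so $[K(\beta):K] = mn$. Dividing through the tower $K \subseteq K(\alpha) \subseteq K(\beta)$ forces $[K(\beta):K(\alpha)] = n$. As $\beta$ is a root of $g(x) - \alpha \in K(\alpha)[x]$, a polynomial of degree $n = [K(\beta):K(\alpha)]$, this polynomial is the minimal polynomial of $\beta$ over $K(\alpha)$ up to a scalar, and therefore irreducible over $K(\alpha)$.

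The only genuine subtlety — and the step I would treat most carefully — is the bookkeeping of leading coefficients ensuring $\deg f(g(x)) = mn$ exactly, together with the standard fact that an irreducible polynomial of degree $[K(\gamma):K]$ vanishing at $\gamma$ is necessarily a scalar multiple of the minimal polynomial of $\gamma$. Everything else follows formally from the multiplicativity of field degrees. No separability or characteristic hypothesis enters, so the lemma holds over an arbitrary field $K$ precisely as stated.
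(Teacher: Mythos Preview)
Your argument is correct and is in fact the standard degree-counting proof of Capelli's Lemma. The paper, however, explicitly states this lemma \emph{without} proof (``We state it below without proof''), so there is no argument in the paper to compare against. Your write-up is self-contained and handles both directions cleanly; the only cosmetic point is that the degenerate cases $\deg f = 0$ or $\deg g = 0$ are vacuous or trivial, as you note, so excluding them up front is harmless.
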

We recall that the discriminant of a monic separable polynomial $F(t)$ of degree $n$ is defined by the resultant of $F$ and the derivative $F^{\prime}$ of $F$.
 If $\tau_1,\dots,\tau_n$ are the distinct roots of $F(t)=0$, the discriminant is
\begin{equation}\label{checker2}
 \mathrm{disc}(F)= \mathrm{Res}(F,F^{\prime})=(-1)^{{n(n-1)}/2}\prod_{j=1}^{n}F^{\prime}(\tau_j)
\end{equation}
\begin{proposition}\label{lin_sep}
 With the above notation, for any positive integer $n$,  $f^{\circ n}(t,x)\in k[t][x]$ is  a monic irreducible polynomial  of degree $d^{n}$ in $x$ which is  separable over $k(t)$. Moreover,  the Galois extension $K_n/K_{n-1}$  is the compositum of  $d^{n-1}$ linearly disjoint field extensions $M_{1}^{\prime}/K_{n-1},\dots ,M_{d^{n-1}}^{\prime}/K_{n-1}.$
\end{proposition}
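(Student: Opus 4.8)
The plan is to induct on $n$; the case $n=1$ is the hypothesis of Theorem~\ref{S_n:ff} together with Theorem~\ref{[S_n]}. So assume $f^{\circ (n-1)}(t,x)$ is monic, irreducible of degree $d^{n-1}$ in $x$ and separable over $k(t)$, with roots $\beta_1,\dots,\beta_{d^{n-1}}\in\Omega$, so $K_{n-1}=k(t,\beta_1,\dots,\beta_{d^{n-1}})$. From $f^{\circ n}(t,x)=f^{\circ (n-1)}(t,f(t,x))$ and $f^{\circ (n-1)}(t,y)=\prod_i(y-\beta_i)$ over $K_{n-1}$ one gets, over $K_{n-1}$, the factorisation $f^{\circ n}(t,x)=\prod_{i=1}^{d^{n-1}}\bigl(f(t,x)-\beta_i\bigr)$. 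Let $M_i'$ be the splitting field of $f(t,x)-\beta_i$ over $K_{n-1}$; then $K_n=M_1'\cdots M_{d^{n-1}}'$ is automatically the compositum, and it remains to prove (i) that $f^{\circ n}$ is monic, irreducible of degree $d^n$, and separable over $k(t)$, and (ii) that the $M_i'/K_{n-1}$ are linearly disjoint.

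For (i): monicity and degree are clear. Irreducibility I would deduce from Capelli's Lemma~\ref{capl} applied to $f^{\circ n}=f^{\circ (n-1)}\circ f$: given (induction) that $f^{\circ (n-1)}$ is irreducible over $k(t)$, it suffices to prove $f(t,x)-\beta$ irreducible over $k(t)(\beta)$ for $\beta$ a root of $f^{\circ (n-1)}$. Now $k(t)(\beta)$ is a one-variable function field over the algebraically closed $k$, with constant field $k$, and $f(t,x)-\beta$ is monic of degree $d$ in $x$, separable, retaining $a$ as a critical point of multiplicity one (its $x$-derivative is $\partial_x f(t,x)$). I would then run the analysis of Theorem~\ref{[S_n]} over this base: the finite places of $k(t)(\beta)$ where $f(t,x)-\beta$ ramifies are the zeros of $\mathrm{disc}_x\bigl(f(t,x)-\beta\bigr)=\pm\prod_c\bigl(f(t,c)-\beta\bigr)$, the product over the zeros $c$ of $\partial_x f$ counted with multiplicity, and by the non-collision hypothesis and the multiplicity-one condition the inertia at each such place is generated by a transposition (Hensel, exactly as in \S\ref{places}); the double-transitivity / ``throwing away roots'' argument of \S\ref{doubly} then forces $\mathrm{Gal}\bigl(f(t,x)-\beta,\,k(t)(\beta)\bigr)\cong S_d$, so in particular $f(t,x)-\beta$ is irreducible over $k(t)(\beta)$, and Capelli gives the irreducibility of $f^{\circ n}$ over $k(t)$. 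Separability of $f^{\circ n}$ follows from $\partial_x f^{\circ n}=\prod_{j=0}^{n-1}(\partial_x f)\bigl(t,f^{\circ j}(t,x)\bigr)\neq 0$ (since $p\nmid d$): a common zero $x_0$ of $f^{\circ n}$ and $\partial_x f^{\circ n}$ would make some $f^{\circ j}(t,x_0)$ a critical point $c$ of $f$ with $f^{\circ(n-1-j)}(t,f(t,c))=0$, excluded for $j=n-1$ by separability of $f$ (so $f(t,c)\neq 0$) and for $j<n-1$ by the non-collision hypothesis; hence $\mathrm{disc}_x f^{\circ n}\neq 0$ in $k(t)$.

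For (ii): since the $\beta_i$ are conjugate over $k(t)$, each $f(t,x)-\beta_i$ is irreducible over $k(t)(\beta_i)$ with $S_d$ splitting field, by the previous paragraph. I would obtain the linear disjointness from the ramification attached to the multiplicity-one critical point $a$: in $\mathrm{disc}_x\bigl(f(t,x)-\beta_i\bigr)=\pm\prod_c\bigl(f(t,c)-\beta_i\bigr)$ the factor $f(t,a)-\beta_i$ appears to the first power, and $\prod_i\bigl(f(t,a)-\beta_i\bigr)=f^{\circ n}(t,a)$. Choose a place $w_i$ of $K_{n-1}$ dividing $f(t,a)-\beta_i$ simply and lying over a place of $k(t)$ unramified in $K_{n-1}/k(t)$ — possible because, by the non-collision hypothesis, $f^{\circ n}(t,a)$ is coprime to the discriminant of $f^{\circ (n-1)}$ (whose zeros, with $P_\infty$, carry the ramification of $K_{n-1}/k(t)$) and to $f^{\circ n}(t,b)$ for the other critical points $b$. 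At $w_i$ the factor $f(t,a)-\beta_i$ vanishes to order one while the remaining critical factors are units, so $f(t,x)-\beta_i$ gains over the completion at $w_i$ a single double root; hence the inertia of a place of $M_i'$ over $w_i$ is generated by a transposition. The same hypothesis forces $w_i$ to be unramified in $M_j'$ for every $j\neq i$ (else the place of $k(t)$ below $w_i$ would divide two of the $f^{\circ n}(t,b)$ for distinct critical points, or lie in the ramified locus of $K_{n-1}/k(t)$). Thus $M_i'\cap\prod_{j\neq i}M_j'$ is unramified at $w_i$ over $K_{n-1}$; as $\mathrm{Gal}(M_i'/K_{n-1})\cong S_d$ (the $S_d$ above persists over $K_{n-1}$ because $K_{n-1}/k(t)(\beta_i)$ is unramified at $w_i$, whereas the splitting field of $f(t,x)-\beta_i$ over $k(t)(\beta_i)$ is not) and a transposition has normal closure $S_d$, this intersection is $K_{n-1}$. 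Hence the $M_i'$ are linearly disjoint over $K_{n-1}$, completing the proposition; comparing degrees then gives $\mathrm{Gal}(K_n/K_{n-1})\cong S_d^{\,d^{n-1}}$, the relative extension being maximal.

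The main obstacle is the ramification bookkeeping underpinning both parts: one must check that the non-collision condition $f^{\circ l}(t,a)\neq f^{\circ m}(t,b)$ really does yield the coprimality used above — that $f^{\circ n}(t,a)$ shares no zero with the discriminant of $f^{\circ (n-1)}$ nor with any $f^{\circ n}(t,b)$ for $b\neq a$ a critical point — and that a place $w_i$ dividing $f(t,a)-\beta_i$ simply can be found over the unramified locus, which is exactly where the multiplicity-one hypothesis on $a$ enters. A secondary point meriting a sentence is that the proof scheme of Theorem~\ref{[S_n]} (inertia a transposition at the ramified finite places, plus the double-transitivity argument of \S\ref{doubly}) is valid over the possibly positive-genus function field $k(t)(\beta)$ — it is, being purely ramification-theoretic — and separability in characteristic $p$ is the mildest of the three, following as indicated.
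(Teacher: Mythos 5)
The central gap is in your irreducibility step, which is circular as written. You propose to prove that $f(t,x)-\beta$ is irreducible over $k(t)(\beta)$ by re-running the argument of Theorem~\ref{[S_n]} over this base and concluding $\mathrm{Gal}\bigl(f(t,x)-\beta,\,k(t)(\beta)\bigr)\cong S_d$. But every ingredient of that argument presupposes irreducibility: the Galois group is a \emph{transitive} subgroup of $S_d$ precisely because the polynomial is irreducible, the ``throwing away roots'' step of \S\ref{doubly} upgrades transitivity to double transitivity (and itself needs irreducibility of the quotient polynomial over the larger field), and the Jordan-type lemma ``primitive plus transposition implies $S_d$'' needs primitivity, hence transitivity. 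Ramification data alone cannot supply transitivity: a reducible $f(t,x)-\beta$ could perfectly well have Galois group $S_{d-1}$ containing many transpositions. So you are assuming what you set out to prove. You need an independent source of transitivity over $k(t)(\beta)$ — e.g.\ a place of $k(t)(\beta)$ at which $f(t,x)-\beta$ is totally and tamely ramified (a Newton-polygon argument at a pole of $\beta$), or the route the paper takes, which is to assert irreducibility of $f(t,x)-a_i$ over $k(t,a_i)$ and $\mathrm{Gal}(f(t,x)-a_i,k(t,a_i))\cong S_d$ directly from the transcendence of $a_i$ over $k$, i.e.\ by treating each factor as an instance of the $n=1$ case over the new base, and only then feeding this into Capelli's Lemma~\ref{capl}.

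Two secondary points. First, your claim that the inertia at \emph{every} ramified finite place of $f(t,x)-\beta$ is a transposition overstates the hypothesis: Theorem~\ref{S_n:ff} assumes only that \emph{some} critical point $a$ has multiplicity one, so you may only claim a transposition at places arising from $a$ (which is all your normal-closure argument actually needs). Second, the non-collision hypothesis is an inequality of critical values in $\Omega$, and by itself does not yield the coprimality and primitive-divisor statements you invoke ($f^{\circ n}(t,a)$ coprime to $\mathrm{disc}_x f^{\circ (n-1)}$ and to the orbits of the other critical points); distinct elements can still share prime divisors, and this bookkeeping — which you flag yourself — is the real content of the disjointness step. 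That said, your treatment of linear disjointness (choose for each $i$ a place dividing $f(t,a)-\beta_i$ simply, unramified in $K_{n-1}$ and in every $M_j'$ with $j\neq i$, whose inertia in $M_i'$ is a transposition, then use that the normal closure of a transposition is $S_d$) is a genuinely different and more substantive route than the paper's, which deduces linear disjointness of the splitting fields simply from the coprimality of the factors (``distinct ramifications''); your argument is close in spirit to the paper's own Proposition~\ref{sub_2indet} and to the standard maximality arguments in the literature. The circular irreducibility step, however, must be repaired before the proposal stands.
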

\begin{proof}
We prove the statement using induction on $n$.
The result holds true for $n=1$ by hypothesis. We use Capelli's Lemma for $n\geq 2.$
Suppose that the result holds true for $  n-1$. Let $a_1,\dots,a_{d^{n-1}}$ be the $d^{n-1}$ distinct roots of $f^{\circ n-1}(t,x)=0$ in the splitting field $K_{n-1}$. 
Then,
\begin{equation}\label{kn+1prod}
 f^{\circ n}(t,x)=f^{\circ n-1}(f(t,x))=\prod_{i=1}^{d^{n-1}}f(t,x)-a_i
 \end{equation}
 The $d^{n-1}$ factors $f(t,x)-a_1,\dots,f(t,x)-a_{d^{n-1}}$  on the right  of (\ref{kn+1prod}) 
are distinct monic polynomials in $x$ of degree $d>2$ which are coprime  in $K_{n-1}[x]$. 
 Since $a_i$ is transcendental over $k$,  $f(t,x)-a_i$ is irreducible over $k(a_i)=k(t,a_i)$ for $1\leq i\leq d^{n-1}$.
Hypothesis on $f(t,x)$ ensures, the derivative $f^{\prime}(t,x)$ is not identically zero, which implies each factor $f(t,x)-a_i$ is
separable over $k(t,a_i)$, which has 
no root algebraic over $k(t)$ while, each root of $f^{\prime}(t,x)$ is algebraic over 
$k(t)$. Thus, $f^{\circ n}(t,x)$ is irreducible  and separable over $k(t)$.
To prove the second part, let $M_i$ be the splitting field of 
 $f(t,x)-a_i$ over $k(t,a_i)$  for each $i=1,\dots,d^{n-1}$ respectively.
 The $d^{n-1}$ distinct roots $a_1,\dots,a_{d^{n-1}}$   of $f^{\circ n-1}(t,x)$ are  transcendental over 
$k$, so that,
\begin{equation}\label{mors-eq}
\begin{split}
  & \mathrm{Gal}(f(t,x),k(t)) \cong  \mathrm{Gal}(f(t,x)-a_i,k(t,a_i))\cong S_d
  \text{ for } 1\leq i\leq d^{n-1}.\\
  &\text{i.e.,}\\
 & \mathrm{Gal}(K_1/k(t)) \cong \mathrm{Gal}(M_i/k(t,a_i))\cong S_d \text{ for } 1\leq i\leq d^{n-1}.\\
  \end{split}
\end{equation}
 The polynomials $f(t,x)-a_1,\dots,f(t,x)-a_{d^{n-1}}$  are distinct and relatively prime in $K_{n-1}[x]$. Therefore,  their splitting fields  
  \begin{equation}\label{disj_spli}
  M_1/k(t,a_1), \dots, M_{d^{n-1}}/k(t,a_{d^{n-1}})
 \end{equation}                                                                             
have distinct ramifications hence, $ M_1/k(t,a_1), \dots, M_{d^{n-1}}/k(t,a_{d^{n-1}})$ are linearly disjoint over $k(t)$.
 Changing the base by $K_{n-1},$ let us denote,
\begin{equation}
 M_i^{\prime}:=M_iK_{n-1} \text{ for } 1\leq i\leq d^{n-1}.
\end{equation}
The extension $M_iK_{n-1}/K_{n-1}$ is Galois and $M_{1}^{\prime},\dots,M_{d^{n-1}}^{\prime}$ are linearly disjoint over $K_{n-1}$.
Thus the splitting field $K_{n}$ of $f^{\circ n}(t,x)$ over $K_{n-1}$ is the  compositum of $d^{n-1}$ linearly
disjoint splitting fields over $K_{n-1}$ given by, 
\begin{equation}\label{lin-hol}
K_{n}/K_{n-1}= M_{1}^{\prime}/K_{n-1}\cdots M_{d^{n-1}}^{\prime}/K_{n-1} \text{ for all } n\geq1.
\end{equation}
  \end{proof}
  \begin{proposition}\label{pal1} 
The  group 
\begin{displaymath}
\mathrm{Gal}(M_iK_{n-1}/K_{n-1})= \mathrm{Gal}(M_i^{\prime}/K_{n-1})\cong S_d \text{ for }1\leq i \leq d^{n-1}.\end{displaymath}
\end{proposition}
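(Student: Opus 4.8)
My plan is to bootstrap from Proposition~\ref{lin_sep} and the ramification description of \S\ref{places}. Put $F:=k(t,a_i)$; then $F\subseteq K_{n-1}$ because $a_i\in K_{n-1}$, and by Proposition~\ref{lin_sep} (cf.~\eqref{mors-eq}) $M_i$ is the splitting field over $F$ of the separable polynomial $f(t,x)-a_i$, with $\mathrm{Gal}(M_i/F)\cong S_d$. Since $K_{n-1}/k(t)$ is Galois and $k(t)\subseteq F\subseteq K_{n-1}$, the extension $K_{n-1}/F$ is Galois, so the translation theorem of Galois theory furnishes a natural isomorphism of $\mathrm{Gal}(M_iK_{n-1}/K_{n-1})$ with the subgroup $N:=\mathrm{Gal}(M_i/(M_i\cap K_{n-1}))$ of $\mathrm{Gal}(M_i/F)\cong S_d$; moreover $N$ is normal in $S_d$, since $M_i\cap K_{n-1}$ is Galois over $F$. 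Because the transpositions form a single conjugacy class that generates $S_d$, it suffices to show that $N$ contains a transposition, and I will obtain one as an inertia generator.

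The local input is the analogue, over the base $F$, of what is done in \S\ref{places}. Up to a factor in $k^{\times}$ (hence, as $k$ is algebraically closed, up to a square) the discriminant $\Delta_i:=\mathrm{disc}_x(f(t,x)-a_i)$ equals $\prod_{j=1}^{d-1}\bigl(f(t,c_j)-a_i\bigr)$, where $c_1=a,c_2,\dots,c_{d-1}$ are the critical points of $f$; the distinguished critical point $a$ has multiplicity one, so $f(t,a)-a_i$ occurs to the first power, and it is distinct from the remaining factors since $f(t,a)\ne f(t,c_j)$ for $j\ge 2$ (the instance $l=m=1$ of the hypothesis, applied to the critical points $a$ and $c_j$). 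At a place $\mathfrak{q}$ of $F$ lying under a simple zero of $f(t,a)-a_i$ and off the finitely many zeros of $\prod_{j\ge 2}\bigl(f(t,a)-f(t,c_j)\bigr)$, one then has $v_{\mathfrak{q}}(\Delta_i)=1$; Hensel's lemma makes $f(t,x)-a_i$ split over the completion $F_{\mathfrak{q}}$ into a single ramified quadratic factor together with linear factors, so $M_i/F$ is tamely ramified at $\mathfrak{q}$ (here $p\ne 2$) with inertia generated by a transposition.

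The decisive point is that $\mathfrak{q}$ can also be chosen unramified in $K_{n-1}/F$. By \S\ref{places}, together with the standard factorisation of $\mathrm{disc}_x(f^{\circ n-1})$ through the iterated critical values, the primes of $k(t)$ ramifying in $K_{n-1}$ are $P_\infty$ and the zeros of $f^{\circ s}(t,c_l)$ for $1\le s\le n-1$. On the other hand, since $a_i\equiv f(t,a)$ at $\mathfrak{q}$ and $f^{\circ n-1}(t,a_i)=0$, the element $f^{\circ n}(t,a)$ vanishes at $\mathfrak{q}$, so the prime $P$ of $k(t)$ below $\mathfrak{q}$ is a zero of $f^{\circ n}(t,a)$ and in particular $P\ne P_\infty$; and the hypothesis $f^{\circ l}(t,a)\ne f^{\circ m}(t,b)$ makes each difference $f^{\circ n}(t,a)-f^{\circ s}(t,c_l)$ ($s\le n-1$) a nonzero element of $\overline{k(t)}$, so only finitely many zeros of $f^{\circ n}(t,a)$ lie on some $f^{\circ s}(t,c_l)$ — choose $\mathfrak{q}$ to avoid these. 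Then, for any place $\mathfrak{Q}$ of $K_{n-1}$ above $\mathfrak{q}$, a routine comparison of ramification indices in the towers $F\subseteq K_{n-1}\subseteq M_iK_{n-1}$ and $F\subseteq M_i\subseteq M_iK_{n-1}$ shows that $\mathfrak{Q}$ is ramified in $M_iK_{n-1}/K_{n-1}$ and that, under the isomorphism above, its inertia group there is the order-two group generated by a transposition. Hence $N$ contains a transposition, so $N=S_d$, proving the proposition. (One could instead run the ``throwing away roots'' argument of \S\ref{doubly} over $K_{n-1}$; the ramification input is identical.)

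The step I expect to be the genuine obstacle is this comparison of ramification: one must align the ramification locus of $M_i/F$, controlled by the critical values $f(t,c_j)$ of $f$ itself, with that of $K_{n-1}/k(t)$, controlled by the iterated critical values $f^{\circ s}(t,c_l)$ for $s\le n-1$, while tracking the base change from $k(t)$ to $F=k(t,a_i)$; this is exactly where the multiplicity-one hypothesis and the orbit-disjointness hypothesis $f^{\circ l}(t,a)\ne f^{\circ m}(t,b)$ are used, and making that argument airtight is the heart of the proof. Two subsidiary points also require care: that $f(t,a)-a_i$ is non-constant in the relevant one-variable function field, so that it really has a simple zero (again a consequence of the wandering hypothesis, which forces the orbit of $a$ to be infinite), and that enough good simple zeros remain after the finitely many exclusions above.
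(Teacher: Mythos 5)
Your opening reduction is sound, and in fact it is more substantive than the paper's own argument at this point: the paper's proof of Proposition~\ref{pal1} consists of the translation isomorphism $\mathrm{Gal}(M_iK_{n-1}/K_{n-1})\cong\mathrm{Gal}(M_i/M_i\cap K_{n-1})$ followed by the bare assertion that $M_i\cap K_{n-1}=k(t)(a_i)$, resting on the linear disjointness claimed in Proposition~\ref{lin_sep}, whereas you correctly observe that $M_i\cap K_{n-1}$ is Galois over $k(t,a_i)$, hence $N=\mathrm{Gal}(M_i/M_i\cap K_{n-1})$ is normal in $S_d$, and so a single inertia transposition in $N$ would force $N=S_d$. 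That reduction, and the local analysis at a place where the discriminant has valuation one (tame, inertia a transposition, $p\neq2$), are fine.

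The genuine gap is in the existence of the place $\mathfrak{q}$, i.e.\ exactly the step you flag as ``the heart of the proof'' but do not actually prove. Two things are missing. First, you need $f(t,a)-a_i$ to have a \emph{simple} zero not shared with the other factors $f(t,c_j)-a_i$; ``the wandering hypothesis forces the orbit of $a$ to be infinite'' gives neither non-constancy in the relevant sense nor simplicity of any zero. Second, and more seriously, your argument that $\mathfrak{q}$ can be chosen unramified in $K_{n-1}$ is a non sequitur: the element $f(t,a)-a_i$ has only \emph{finitely many} zeros in total, so knowing that the ``bad'' zeros (those lying over zeros of some $f^{\circ s}(t,c_l)$, $s\le n-1$, or over $P_\infty$) form a finite set guarantees nothing --- the bad set could exhaust all candidates. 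Moreover, the hypothesis $f^{\circ l}(t,a)\neq f^{\circ m}(t,b)$ is an inequality of elements, and inequality of functions does not imply disjointness (or even non-containment) of their zero loci: $t$ and $t^{2}$ are distinct in $k(t)$ yet have the same zero. (Even the claim $P\neq P_\infty$ needs an argument: an element integral over $k[t]$ can vanish at a place above $t=\infty$.) What is actually required here is a primitive-prime-divisor or multiplicity-one statement for $f^{\circ n}(t,a)$ relative to the earlier critical orbit --- some quantitative input such as degree/valuation growth --- which is precisely the content the paper tries to supply in Proposition~\ref{final-prop-lin} and in the proof of Proposition~\ref{sub_2indet}, and which the hypothesis of Theorem~\ref{S_n:ff} alone does not deliver. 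As written, your proof establishes the correct framework but leaves its decisive step unproved.
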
 
\begin{proof}
From the above discussion,
\begin{displaymath}
\mathrm{Gal}(f(t,x)-a_i,k(t,a_i))\cong
\mathrm{Gal}(M_i/k(t,a_i))\cong S_d, \text{ for }1\leq i\leq d^{n-1}.
\end{displaymath} we find the group 
\begin{displaymath}\mathrm{Gal}(M_iK_{n-1}/K_{n-1})=\mathrm{Gal}(M_i^{\prime}/K_{n-1})\end{displaymath} is a normal subgroup of 
$
\mathrm{Gal}(M_i/k(t,a_i)).
$ But,
 \begin{displaymath}\label{la1-eq}
  \mathrm{Gal}(M_iK_{n-1}/K_{n-1})\cong \mathrm{Gal}(M_i/M_i\cap K_{n-1})=\mathrm{Gal}(M_i /k(t)(a_i))
  \end{displaymath}
  so that,
  \begin{displaymath}
  \mathrm{Gal}(M_iK_{n-1}/K_{n-1})
= \mathrm{Gal}(M_i^{\prime}/K_{n-1})= S_d \text{ for } 1\leq i\leq d^{n-1}.
\end{displaymath}
\end{proof}
\subsection{Squarefree Discriminants}
Let us denote,
$D_i:=\mathrm{disc}_{x}(f(t,x)-a_i)$, the discriminant of $f(t,x)-a_i$ for $i=1,2,\dots,d^{n-1}$. 
By Proposition \ref{lin_sep},  $D_{1},\dots,D_{d^{n-1}}$   are relatively prime in $K_{n-1}$ and,
\begin{equation}\label{ful_si}\mathrm{Gal}(f(t,x)-a_i,k(t,a_i))=\mathrm{Gal}(M_i/k(t,a_i))\cong S_d, \text{ for } 1\leq i\leq d^{n-1}.
\end{equation} The full symmetric group in (\ref{ful_si}) implies 
the  discriminant $D_i=\mathrm{disc}_x(f(t,x)-a_i)$ is squarefree for $i=1,2,\dots,d^{n-1}$. Therefore the
 product,    $D_1\cdots D_{d^{n-1}}$ is squarefree.
 \paragraph{}By Proposition \ref{pal1}, $\mathrm{Gal}(M_i^{\prime}/K_{n-1})=S_d$ for each $i=1,2,\dots, d^{n-1}$. Denote by
$E_i^{\prime}$, 
the unique quadratic extension  of $K_{n-1}$ inside  $M_i^{\prime}$,  given by $E_i^{\prime}=K_{n-1}(\sqrt{D_i})$.  Since the discriminants $D_1,\dots,D_{d^{n-1}}$ are relatively prime and squarefree, $E_{1}^{\prime},\dots, E_{d^{n-1}}^{\prime}$
are linearly disjoint over $K_{n-1}$. 
($E_i^{\prime}$ is
the fixed field of $A_d$ in $M_i^{\prime}$, where, $A_d$ is the alternating group on $d$ letters, which is a normal subgroup of $S_d$.)
\begin{proposition}\label{final-prop-lin}
Suppose $n$ is a positive integer. Then, any prime $P$ of 
$k(t)$ ramifying in $K_{n}$, does not ramify in $K_{n-1}$. 
 \end{proposition}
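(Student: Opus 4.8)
The plan is to read the ramified primes off the discriminant of the iterate and to compare this discriminant across consecutive levels. As in \S\ref{places}, but now with $f^{\circ n}(t,x)$ in place of $f(t,x)$, a finite prime $P=P_c$ of $k(t)$ can ramify in $K_n$ only if $P$ divides $\Delta_{f^{\circ n}}(t):=\mathrm{disc}_x(f^{\circ n}(t,x))$ (the polynomial is monic in $x$, so no degree is lost on reduction), and the only ramified infinite prime is the tame prime $P_\infty$. The computational heart is the factorization
\[
\Delta_{f^{\circ n}}(t)\;=\;u_n\cdot\prod_{m=1}^{n}R_m(t)^{\,d^{\,n-m}},\qquad R_m(t):=\mathrm{Res}_x\bigl(f'(t,x),\,f^{\circ m}(t,x)\bigr)\in k[t],
\]
with $u_n\in k^{\times}$. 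This follows from the chain rule $(f^{\circ n})'(t,x)=\prod_{j=0}^{n-1}f'\bigl(t,f^{\circ j}(t,x)\bigr)$, multiplicativity of the resultant, and the fact that $f^{\circ j}$ carries the roots of $f^{\circ n}$ onto those of $f^{\circ(n-j)}$ with uniform multiplicity $d^{j}$; the constant $u_n$ is a power of $\mathrm{lc}_x(f')=d\in k^{\times}$ (using $p\nmid d$) and so contributes no primes. For $n=1$ this is~(\ref{checker2}).

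Hence the finite primes ramifying in $K_m$ are among the prime divisors of $R_1(t)\cdots R_m(t)$. Granting the \emph{pairwise coprimality of the $R_m(t)$ in $k[t]$} — which is the main point and is proved below — a prime divisor of $R_m(t)$ then divides $\Delta_{f^{\circ m}}(t)$ to the same power to which it divides $R_m(t)$ and is coprime to all other factors, so by a Hensel-lemma argument as in \S\ref{places} it genuinely ramifies in $K_m$, with inertia generated by transpositions on the roots of $f^{\circ m}$. It follows that a prime ramifying in $K_n$ but not in $K_{n-1}$ must divide $R_n(t)$ and be coprime to $R_1(t)\cdots R_{n-1}(t)$; conversely, by Proposition~\ref{lin_sep}, every prime $\mathfrak P$ of $K_{n-1}$ that ramifies further in $K_n$ lies over such a divisor, since $\mathfrak P$ divides some $D_i=\mathrm{disc}_x(f(t,x)-a_i)=\pm d^{\,d}\prod_{c:\,f'(t,c)=0}\bigl(f(t,c)-a_i\bigr)$, which forces $f(t,c)\equiv a_i\pmod{\mathfrak P}$ for a critical point $c$ and hence $f^{\circ n}(t,c)\equiv f^{\circ(n-1)}(t,a_i)=0$, i.e.\ $\mathfrak P\mid R_n$.

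Thus everything reduces to the coprimality of $R_n(t)$ with $R_1(t)\cdots R_{n-1}(t)$, and this is where the orbit hypothesis of Theorem~\ref{S_n:ff} is spent. I would argue by contradiction: if a prime $P$ of $k(t)$ divided both $R_n(t)$ and $R_m(t)$ for some $1\le m\le n-1$, pass to the Galois closure $F$ over $k(t)$ of the splitting field of $f'(t,x)$ and fix a prime $\mathfrak Q$ of $F$ above $P$. Divisibility of $R_n$ yields $v_{\mathfrak Q}\bigl(f^{\circ n}(t,c)\bigr)>0$ for some critical point $c$; since $\mathrm{Gal}(F/k(t))$ acts transitively on the primes of $F$ above $P$ and permutes the critical points, divisibility of $R_m$ yields $v_{\mathfrak Q}\bigl(f^{\circ m}(t,c')\bigr)>0$ for the \emph{same} $\mathfrak Q$ and some critical point $c'$. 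Reducing modulo $\mathfrak Q$ and expanding $f^{\circ n}(t,c)=f^{\circ(n-m)}\bigl(t,f^{\circ m}(t,c)\bigr)$, the forward $f$-orbits of $\bar c$ and $\bar c'$ both reach $0$, at steps $n$ and $m<n$; tracing these post-critical orbits back yields an equality $f^{\circ l}(t,a)=f^{\circ m}(t,b)$ with $1\le l\le m\le n$, $a$ and $b$ critical and $(l,a)\ne(m,b)$, contradicting the hypothesis. With coprimality in hand, a prime newly ramified in $K_n$ is unramified in $K_{n-1}$; moreover the primes of $K_{n-1}$ above it are unramified over $k(t)$, such a prime ramifies in exactly the one factor $M_i'/K_{n-1}$ singled out by $f(t,a)\equiv a_i\pmod{\mathfrak P}$ for the multiplicity-one critical point $a$, and its inertia there has order $2$ (using $p\ne2$), so $\sqrt{D_i}\notin K_{n-1}$. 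The infinite prime $P_\infty$ is handled directly from the shape of $f$ near $x=\infty$: it is totally and tamely ramified at every level and introduces nothing new in $K_n/K_{n-1}$.

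The step I expect to resist is precisely the last move of that contradiction — upgrading the hypothesis, an inequality of \emph{elements} of $\Omega$, to the divisor-theoretic statement that the numerator divisors of the critical-orbit values $f^{\circ m}(t,c)$ at distinct levels $m\le n$ have disjoint support. One has to exclude an accidental common prime occurring at a place where two critical points, or a critical point and $0$, already coincide; this is where the multiplicity-one hypothesis on $a$, together with the tameness conditions $p\nmid d(d-1)$, are needed, to keep the critical orbits in general position modulo every prime under consideration (and, in the subcase $c=c'$, to secure a primitive prime divisor of $f^{\circ n}(t,a)$ in the sense recalled in the Introduction). Everything else — the discriminant factorization, the identification of inertia, and assembling the compositum $K_n/K_{n-1}$ — is routine given \S\ref{places}, Proposition~\ref{lin_sep}, and Proposition~\ref{pal1}.
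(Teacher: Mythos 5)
Your reduction is, in outline, the same strategy the paper uses: read the finite primes ramified at level $n$ off the discriminant of the iterate, observe via $\mathrm{disc}_x(f(t,x)-a_i)=\pm d^{\,d}\prod_{c}(f(t,c)-a_i)$ that a prime newly ramified in $K_n/K_{n-1}$ must divide the level-$n$ critical-orbit resultant $R_n$, and then claim that $R_n$ is coprime to $R_1\cdots R_{n-1}$; the paper's own proof simply asserts this disjointness (``any prime divisor of $\mathrm{disc}_x(f(t,x)-a_i)$ does not divide $\mathrm{disc}_x(f^{\circ n-1}(t,x))$'') without further argument. But your proposal does not close the one step on which everything rests, and you say so yourself. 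The hypothesis of Theorem~\ref{S_n:ff} is an inequality of \emph{elements} of $\Omega$: $f^{\circ l}(t,a)\neq f^{\circ m}(t,b)$. From a common prime divisor of $R_n$ and $R_m$ your contradiction argument produces only congruences $f^{\circ n}(t,c)\equiv 0$ and $f^{\circ m}(t,c')\equiv 0 \pmod{\mathfrak Q}$, and ``tracing the orbits back'' then yields at best an equality modulo $\mathfrak Q$, never an equality in $\Omega$; distinct elements of $k[t]$ (say $t$ and $t(t-1)$) can perfectly well share a zero. So the divisor-support disjointness of the critical values across levels is strictly stronger than the stated hypothesis, and neither multiplicity one nor tameness ($p\nmid d(d-1)$, $p\neq 2$) supplies it by itself: what is needed is exactly a primitive-prime-divisor statement, which in the paper appears only for a specific family in Proposition~\ref{sub_2indet}. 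A second, smaller defect: the coprimality you need involves \emph{all} pairs of critical points $c,c'$, whereas the hypothesis constrains only the orbit of the single distinguished critical point $a$, so even the element-wise non-collision you invoke for arbitrary $(c,c')$ is not available to you.

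In short, the parts you call routine are indeed fine and match the paper (the factorization $\Delta_{f^{\circ n}}=u_n\prod_{m\le n}R_m^{d^{\,n-m}}$, the identification of newly ramified primes with divisors of $R_n$, the handling of $P_\infty$), but the central coprimality of $R_n$ with $R_1\cdots R_{n-1}$ is not proved --- it is precisely the step you flag as likely to resist. Since the paper's proof of Proposition~\ref{final-prop-lin} makes the same unsubstantiated leap, you have correctly located the true crux of the statement, but as written your argument, like the paper's, leaves it open.
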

\begin{proof}
We prove the claim by induction. 
Let, $\alpha_1,\dots,\alpha_d$ be  the $d$ distinct roots of $f(t,x)=0$ as befoe.
The case $n=2$, follows  by noting that  $\mathrm{disc}_x(f(t,x))$ does not have any common factor with the   $\mathrm{disc}_x(f(t,x)-\alpha_i)$ for each $i=1,2,\dots,d$. 
Suppose the result holds true holds for $n-1.$
As before,  we write, 
\begin{equation}\label{x_0}
 f^{\circ n}(t,x)=f^{\circ n-1}(f(t,x))=\prod_{i=1}^{d^{n-1}}f(t,x)-a_i
\end{equation}
where $a_1,\dots,a_{d^{n-1}}$ are the $d^{n-1}$ distinct roots of $f^{\circ n-1}(t,x)=0$.
By  Proposition \ref{lin_sep},
each of $d^{n-1}$ factors $f(t,x)-a_i$ on the right of (\ref{x_0}) are
 distinct, irreducible, separable polynomials over $k(t)(a_i)$, monic in $x$ of degree $d>2$, which are relatively prime in $K_{n-1}[x]$. 
The splitting field $K_n$ of $f^{\circ n}(t,x)$ over $K_{n-1}$ is the compositum of 
 linearly disjoint Galois extensions $ M_{1}^{\prime}/K_{n-1},\dots, M_{d^{n-1}}^{\prime}/K_{n-1}$.
By inductive hypothesis the 
 $(n-1)$-th iterate of $f(t,x)$ is,
\begin{equation}\label{x_1}
 f^{\circ n-1}(t,x)=\prod_{j=1}^{d^{n-2}}f(t,x)-b_j
\end{equation}
where, $b_1,\dots,b_{d^{n-2}}$ are the $d^{n-2}$ distinct roots of 
$f^{\circ n-2}(t,x)$, such that each  of the $d^{n-2}$ factors $f(t,x)-b_j$ on the right of  (\ref{x_1}) are distinct, irreducible, separable polynomials over $k(t)(b_j)$, monic in $x$ of degree $d>2$, which are relatively prime in $K_{n-2}[x]$.
The splitting field $K_{n-1}$ of $f^{\circ n-1}(t,x)$ is the compositum of $d^{n-2}$ linearly disjoint Galois extensions $L_1^{\prime},\dots,L_{d^{n-2}}^{\prime}$  over $K_{n-2}$.
 \begin{equation}
 K_{n-1}=L_1^{\prime},\dots,L_{d^{n-2}}^{\prime}
 \end{equation} 
 Thus,  for each $i=1,2,\dots,d^{n-1}$, any prime divisor of the discriminant
  $\mathrm{disc}_x(f(t,x)-a_i)$ does not 
 divide the discriminant $\mathrm{disc}_x(f^{\circ n-1}(t,x))$. Therefore any prime  ramifying in $K_n$ 
  does not ramify in $K_{n-1}$.
  For if a prime $P$ of $k[t]$, ramifying in $K_{n}$, also ramifies in $K_{n-1}$, then it ramifies in one of the  $L_i^{\prime}$ for $1\leq i \leq d^{n-2}$, which is a contradiction for 
 \(M_1^{\prime},\dots,M_{d^{n-1}}^{\prime}\)  being linearly disjoint over $K_{n-1}.$
  Thus any prime $P$ of $k[t]$ ramifying in $K_n$
 does not ramify in any subextension of $K_n$ for all $n\geq 1$.
\end{proof}
 \begin{proposition}\label{rel-max}
The relative Galois extension $K_{n}/K_{n-1}$ is maximal, that is $\mathrm{Gal}(K_{n}/K_{n-1})\cong 
 S_{d}^{d^{n-1}}$ for all $n\geq 1.$ 
 \end{proposition}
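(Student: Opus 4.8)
The statement is the heart of the induction behind Theorem~\ref{S_n:ff}, and the plan is to convert the information already assembled — that by Proposition~\ref{lin_sep} we may write $K_n=M_1'\cdots M_{d^{n-1}}'$ as a compositum of Galois extensions $M_i'/K_{n-1}$, and that by Proposition~\ref{pal1} each $\mathrm{Gal}(M_i'/K_{n-1})\cong S_d$ — into an honest internal direct product. Restriction embeds $\mathrm{Gal}(K_n/K_{n-1})$ into $\prod_{i=1}^{d^{n-1}}\mathrm{Gal}(M_i'/K_{n-1})\cong S_d^{d^{n-1}}$ as a subgroup surjecting onto every coordinate, so it suffices to show the $M_i'$ are \emph{jointly} linearly disjoint over $K_{n-1}$, i.e.\ $[K_n:K_{n-1}]=(d!)^{d^{n-1}}$; comparing orders then forces $\mathrm{Gal}(K_n/K_{n-1})\cong S_d^{d^{n-1}}$. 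One cannot simply quote pairwise disjointness here: for Galois extensions with a common abelian quotient (here $S_d^{\mathrm{ab}}\cong\mathbf{Z}/2$) pairwise disjointness does not give the full product, so a genuine argument is required, which is exactly why the Squarefree Discriminants subsection and Proposition~\ref{final-prop-lin} are in place.

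I would prove joint linear disjointness by induction on the number of factors. Put $N_r:=M_1'\cdots M_r'$ and assume inductively $\mathrm{Gal}(N_r/K_{n-1})\cong S_d^{r}$, the case $r=1$ being Proposition~\ref{pal1}. Both $N_r$ and $M_{r+1}'$ are Galois over $K_{n-1}$, so $\mathrm{Gal}(N_{r+1}/K_{n-1})$ is the fibre product of $\mathrm{Gal}(N_r/K_{n-1})$ and $\mathrm{Gal}(M_{r+1}'/K_{n-1})$ over $\mathrm{Gal}((N_r\cap M_{r+1}')/K_{n-1})$, and this fibre product is the full direct product exactly when $N_r\cap M_{r+1}'=K_{n-1}$. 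Now $N_r\cap M_{r+1}'$ is Galois over $K_{n-1}$, hence corresponds to a normal subgroup of $\mathrm{Gal}(M_{r+1}'/K_{n-1})\cong S_d$. Since $d>2$, every proper normal subgroup of $S_d$ is contained in $A_d$ (this subsumes the exceptional $V_4\trianglelefteq S_4$), so either $N_r\cap M_{r+1}'=K_{n-1}$, which is what we want, or $N_r\cap M_{r+1}'$ contains the fixed field of $A_d$, namely the quadratic extension $E_{r+1}'=K_{n-1}(\sqrt{D_{r+1}})$ with $D_{r+1}=\mathrm{disc}_x(f(t,x)-a_{r+1})$ singled out before Proposition~\ref{final-prop-lin}.

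To exclude the second alternative I would rerun the ramification analysis of \S\ref{places} one level up. A prime divisor $\mathfrak p$ of $D_{r+1}$ in $K_{n-1}$ lies over a finite place $P_c$ of $k(t)$ and is tamely ramified in $M_{r+1}'$ with inertia generated by a transposition; since $D_{r+1}$ is squarefree and $p\ne2$, such $\mathfrak p$ already ramifies in the quadratic layer $E_{r+1}'$. On the other hand $N_r=M_1'\cdots M_r'$ ramifies over $K_{n-1}$ only at primes lying over finite places of $k(t)$ dividing some $D_i$ with $i\le r$ or over places ramified in $K_{n-1}/k(t)$; the discriminants $D_1,\dots,D_{d^{n-1}}$ are pairwise coprime, and by Proposition~\ref{final-prop-lin} the place $P_c$ is unramified in $K_{n-1}$, so $\mathfrak p$ is unramified throughout $N_r$. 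This contradicts $E_{r+1}'\subseteq N_r$. Hence $N_r\cap M_{r+1}'=K_{n-1}$, the induction continues, and at $r=d^{n-1}$ we obtain $\mathrm{Gal}(K_n/K_{n-1})\cong S_d^{d^{n-1}}$ for every $n\ge1$.

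The main obstacle is precisely this last step: the only way the direct product can fail is for a nontrivial intersection $N_r\cap M_{r+1}'$ to appear, and by the normality argument such an intersection must contain the quadratic layer $E_{r+1}'$; so the whole statement comes down to showing $E_{r+1}'$ is ramified at a place that remains unramified throughout $N_r$, equivalently that $E_1',\dots,E_{d^{n-1}}'$ are jointly linearly disjoint over $K_{n-1}$. That is exactly what the coprimality and squarefreeness of the discriminants, together with Proposition~\ref{final-prop-lin}, deliver; everything else is formal Galois theory and the order count. A minor bookkeeping point worth isolating is to restrict attention to the finite places dividing the $D_i$, so that the infinite place $P_\infty$, which is ramified at every level, never interferes with the disjointness argument.
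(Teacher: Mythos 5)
Your proposal is correct and follows the same overall architecture as the paper: decompose $K_n/K_{n-1}$ as the compositum of the splitting fields $M_i'$ of the factors $f(t,x)-a_i$, use $\mathrm{Gal}(M_i'/K_{n-1})\cong S_d$ from Proposition~\ref{pal1}, and conclude by a disjointness-plus-order-count argument. The difference is in how the disjointness is handled. The paper's proof records only the pairwise condition $M_i'\cap M_j'=K_{n-1}$ in (\ref{bvis}) and then invokes Lang VI, \S 1.14--1.15, whose hypothesis is really the successive (joint) condition $M_{r+1}'\cap(M_1'\cdots M_r')=K_{n-1}$; the joint disjointness itself is asserted back in Proposition~\ref{lin_sep} via ``distinct ramifications,'' and the material on squarefree, pairwise coprime discriminants and the quadratic layers $E_i'$ is set up but never explicitly deployed in the proof of the proposition. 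You instead prove the joint disjointness outright: since $N_r\cap M_{r+1}'$ is Galois over $K_{n-1}$ and every proper normal subgroup of $S_d$ ($d>2$) lies in $A_d$, a nontrivial intersection would force $E_{r+1}'=K_{n-1}(\sqrt{D_{r+1}})\subseteq N_r$, which you rule out because a prime of $K_{n-1}$ dividing the squarefree $D_{r+1}$ ramifies in $E_{r+1}'$ (here $p\neq 2$) but, by coprimality of the $D_i$ and Proposition~\ref{final-prop-lin}, is unramified in $N_r$. This buys a complete argument at exactly the point where the paper is thinnest --- your observation that pairwise disjointness alone cannot suffice, because the $S_d$'s share the abelian quotient $\mathbf{Z}/2\mathbf{Z}$, is well taken --- at the cost of a slightly longer induction on the number of factors; it uses only ingredients the paper has already established, so it is a faithful strengthening rather than a departure.
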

 \begin{proof}
 We use induction on $n$. The  result holds true for $n=1$, i.e.,.  
\begin{displaymath}
 \mathrm{Gal}(f(t,x),k(t))=\mathrm{Gal}(K_1/K_0)\cong S_d.
\end{displaymath}
Suppose that the result holds true for $n-1$, i.e.,
\begin{displaymath}
 \mathrm{Gal}(f^{\circ n-1}(t,x), k(t))\cong [S_d]^{n-1}. 
\end{displaymath}
For any fixed $n\in \N$, the splitting field $K_n$ of $f^{\circ n}(t,x)$ over $K_{n-1}$ is the compositum of $d^{n-1}$ linealy disjoint Galois extensions given by,
 \begin{equation}\label{bvis}
 K_{n}/K_{n-1}=M_1^{\prime}/K_{n-1}\cdots M_{d^{n-1}}^{\prime}/K_{n-1}
 \text{ and }
 M_i^{\prime}\cap M_j^{\prime}=K_{n-1} \text{ if } i\neq j. 
 \end{equation}
By Proposition \ref{pal1},
\begin{displaymath}                                                                 \mathrm{Gal}(M_i^{\prime}/K_{n-1})\cong S_d, \text{ for } 
1\leq i\leq d^{n-1}
\end{displaymath}
Equation (\ref{bvis}) can be best visualized in the diamond diagram below:                                      
\begin{center}
\begin{tikzpicture}[scale=.6] 
  \node (30) at (-3,4) {$K_{n}$};
\node (20) at (-6,2) {${M_1^{\prime}}$};
\node (21) at (0,2)  {${M_{d^{n-1}}^{\prime}}$};
\node (10) at (-3,0) {$K_{n-1}$};
\draw [dashed](20) -- (21);
\draw [thick, red](30) -- (20);
\draw [thick, red] (21)-- (10);
\draw (10)--(20) --(30)--(21);
\end{tikzpicture}
\end{center}
By [Lang, \cite{lan}, VI,\S 1.14, 1.15],
\begin{displaymath}
 \mathrm{Gal}(K_{n}/K_{n-1})\cong \mathrm{Gal}(M_1^{\prime}/K_{n-1})\times\dots\times(M_{d^{n-1}}^{\prime}/K_{n-1})
\end{displaymath}
\begin{displaymath}
 \sigma\mapsto (\sigma|_{M_1^{\prime}},\dots,\sigma|_{M_{d^{n-1}}^{\prime}}) \text{ is an isomorphism.}
\end{displaymath}
This gives,
\begin{equation}\label{subprf}
  \mathrm{Gal}(K_{n}/K_{n-1})\cong S_{d}^{d^{n-1}} \text{ for } 1\leq i\leq d^{n-1}.
 \end{equation}
  Thus,  $ \mathrm{Gal}(K_{n}/K_{n-1})$  is maximal for all $n\geq 1$.
 \end{proof}
We know that,
\begin{displaymath}[K_{n}:k( t)]=[K_{n}:K_{n-1}][K_{n-1}:K_{n-2}]\cdots[K_1:k(t)]
 \end{displaymath}
From (\ref{subprf}),
\begin{displaymath}
 [K_{n}:k(t)]=(d!)^{d^{n-1}}  \cdot(d!)^{d^{n-2}}\cdots (d!)^{d} \cdot  d!   =|[S_d]^{n}| 
\end{displaymath}
  Thus
 \begin{equation}\label{linear-gal}
 \mathrm{Gal}(K_{n}/k(t))\cong [S_d]^{n} \text{ for all } n\geq1.
\end{equation} 
Therefore  $\mathrm{Gal}(K_{n}/k(t))$ is maximal for $n\geq 1$.
This  completes the proof of  Theorem \ref{S_n:ff}.
\section{Applications}\label{mo_nmo}
In this section we consider both Morse and non-Morse polynomials in $\F_q[t][x]$
and see that most  polynomials in $\F_q[t][x]$ 
have the property  that the Galois group of the field extension formed
by adjoining the roots of the $n$-th iterate is as large as possible.
\begin{definition} [Geyer \cite{mj}]
Let $K$ be a field of characteristic $p\geq 0.$ Suppose $f\in K[x]$, then $f$ is   a Morse function, if: 
\begin{enumerate}
 \item  the critical points of $f$, i.e., the zeros of $f^{\prime}$ are non degenerate. i.e., $f^{\prime}$ has simple roots. i.e., $f^{\prime}(\tau)=0$ implies 
 $f^{\prime\prime}(\tau)\neq 0.$
 \item  the critical values of $f$ are distinct. i.e.,
 \begin{displaymath}
  f^{\prime}(\eta)=f^{\prime}(\tau)=0 \text{ and } f(\eta)=f(\tau)\implies
  \eta=\tau
 \end{displaymath}
 \item If $p>0$, then $f(x)$ has no pole of
 order divisible by $p.$
\end{enumerate}
\end{definition}
 A basic fact 
is that   any polynomial with  generic choice of coefficients  is Morse.
 \begin{theorem}\label{jpth}[Serre \cite{JP},Theorem 4.4.5]
 If $K$ is any field of characteristic $p\geq 0$,
not dividing $n$, and $f(X)\in K[X]$ is a Morse function of degree $n$, then for any indeterminate $T$ the group $\mathrm{Gal}(f(X)-T,K(T))=S_n.$
\end{theorem}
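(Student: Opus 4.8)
The plan is to realise $G:=\mathrm{Gal}(f(X)-T,K(T))$ as the monodromy group of the degree-$n$ cover $\varphi\colon \mathbb{P}^1\to\mathbb{P}^1$, $X\mapsto f(X)=T$, and to pin it down from its ramification. Since $f(X)-T$ is monic in $X$ and linear in $T$, it is irreducible in $K[X,T]$, hence irreducible over $K(T)$ by Gauss's lemma; its $X$-derivative is $f'(X)\neq 0$ (of degree $n-1$, nonzero as $p\nmid n$), and $\gcd(f(X)-T,f'(X))=1$ in $K(T)[X]$ for degree reasons, so $f(X)-T$ is separable and $G$ is a well-defined transitive subgroup of $S_n$. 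Because $G$ acts faithfully on the $n$ roots, it suffices to produce $S_n$ inside the \emph{geometric} Galois group $\overline G:=\mathrm{Gal}(f(X)-T,\overline K(T))\trianglelefteq G$: then $S_n\subseteq\overline G\subseteq G\subseteq S_n$ forces equality. So I would base change to $\overline K$ at the outset.

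Next I would compute the ramification of $\varphi$ over $\mathbb{P}^1_{\overline K}$. The branch locus is $\{f(\tau):f'(\tau)=0\}\cup\{\infty\}$; since $p\nmid n$, $f'$ has degree $n-1$, and the first Morse condition makes its roots $\tau_1,\dots,\tau_{n-1}$ simple while the second makes the critical values $c_j:=f(\tau_j)$ pairwise distinct, so there are exactly $n-1$ finite branch points. Over $\infty$ the cover is totally ramified of index $n$, which is tame because $p\nmid n$; hence the inertia (= decomposition) group at the unique place over $\infty$ is cyclic of order $n$ acting simply transitively on the roots, i.e. generated by an $n$-cycle $\gamma_\infty$. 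Over a finite branch point $c_j$ the Morse hypothesis gives $f(X)-c_j=\mathrm{unit}\cdot(X-\tau_j)^2\prod_i(X-\rho_i)$ with $n-2$ simple roots $\rho_i\neq\tau_j$, so by Hensel's lemma over the completion at $T=c_j$ the polynomial $f(X)-T$ splits into one ramified quadratic factor (attached to $\tau_j$) and $n-2$ unramified linear factors; since $p\neq 2$ — which is in any case forced once $n\geq 2$, because in characteristic $2$ one has $f''\equiv 0$, so $f'$ is a polynomial in $X^2$ and cannot have simple roots — that quadratic extension is tamely ramified and the inertia at $c_j$ is generated by the transposition interchanging its two roots and fixing the other $n-2$ roots.

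Now I would invoke the structure of the tame fundamental group. The cover $\varphi$ is tamely ramified everywhere, so $\overline G$ is a quotient of $\pi_1^{\mathrm{t}}\bigl(\mathbb{P}^1_{\overline K}\setminus\{c_1,\dots,c_{n-1},\infty\}\bigr)$, and since $\varphi$ is Galois this quotient map is onto; as the tame fundamental group of the $n$-times punctured line is topologically generated by inertia generators $\gamma_1,\dots,\gamma_{n-1},\gamma_\infty$ subject to the single relation $\gamma_1\cdots\gamma_{n-1}\gamma_\infty=1$, we get $\overline G=\langle\gamma_1,\dots,\gamma_{n-1},\gamma_\infty\rangle$, and the relation lets us drop $\gamma_\infty$, so $\overline G=\langle\gamma_1,\dots,\gamma_{n-1}\rangle$ is generated by transpositions. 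It contains the $n$-cycle $\gamma_\infty$, hence is transitive. Finally I would apply the elementary fact that a transitive subgroup of $S_n$ generated by transpositions is all of $S_n$: attach to the transpositions a graph on $\{1,\dots,n\}$, note transitivity forces it to be connected, and observe that the transpositions along any spanning tree already generate $S_n$ (alternatively, this transitive group contains a transposition and is primitive, so one may quote [JP, Lemma 4.4.4]). Thus $\overline G=S_n$, whence $G=S_n$.

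The only non-formal ingredient — everything else being the ramification bookkeeping above and a short piece of permutation-group theory — is the appeal to the tame fundamental group of the punctured projective line, namely that a tame cover of $\mathbb{P}^1_{\overline K}$ is controlled by its inertia generators subject to the usual single product relation. In characteristic $0$ this is Riemann's existence theorem; in characteristic $p$ it is Grothendieck's specialization/tame-$\pi_1$ theory (with Abhyankar's lemma), and this is exactly the point at which the hypotheses $p\nmid n$ and $p\neq 2$ are consumed, guaranteeing tameness at $\infty$ and at the finite branch points respectively.
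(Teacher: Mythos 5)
Your argument is correct, but note that the paper does not prove this statement at all: it is quoted verbatim as Theorem 4.4.5 of Serre \cite{JP} and used as a black box, so the only meaningful comparison is with Serre's proof and with the paper's parallel argument for Theorem \ref{[S_n]}. What you wrote is essentially that standard proof: reduce to the geometric group over $\overline{K}(T)$, observe that the Morse conditions give $n-1$ distinct finite branch points with inertia a transposition (tame, the double root having multiplicity exactly $2$) and total tame ramification at infinity giving an $n$-cycle, and conclude by the permutation-group fact that a transitive subgroup of $S_n$ generated by transpositions (equivalently, primitive and containing a transposition, \cite{JP} Lemma 4.4.4) is all of $S_n$; this is exactly the ramification template the paper itself follows in \S 2 for Theorem \ref{[S_n]} and Proposition \ref{su_2in}. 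The one place where you use more machinery than necessary is the appeal to the presentation of the tame fundamental group of the punctured line with its product relation in order to discard $\gamma_\infty$: all you need is that $\overline{K}(T)$ has no nontrivial everywhere-unramified extensions (simple connectedness of $\mathbb{P}^1$, e.g. by Riemann--Hurwitz), which already shows $\overline{G}$ is generated by the inertia subgroups, and since transitivity is free from irreducibility you may keep the $n$-cycle in the generating set without harm. Your observation that $p\neq 2$ is automatic (in characteristic $2$ the formal second derivative vanishes identically, so Morse condition (1) fails as soon as $f'$ is nonconstant, which $p\nmid n$ and $n\geq 2$ force) is accurate and worth keeping, since the theorem's hypotheses mention only $p\nmid n$.
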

Let $K$ be a Hilbertian field and $g(x)\in K[x]$ be any arbitrary polynomial of degree $d>0$. Therefore for an indeterminate $S$, the function $g(x)+Sx$ is Morse, (as well as separable). By Theorem \ref{jpth}, for an indeterminate $T$ over $K$, the group
\begin{equation}\label{2indet}
 \mathrm{Gal}(g(x)+Sx+T, K(S,T))\cong S_d.
\end{equation}
Recall that, every irreducible polynomial over a finite field is separable. Let us consider the following Proposition to illustrate (\ref{2indet}) over $k(t)$.
\begin{proposition}\label{su_2in}
 Let $a(t),b(t)$ be irreducible polynomials of bounded degree in $\F_q[t]$ such that
 $ f(t,x)= x^{d}+a(t)x^{d-1}+b(t) \in \F_q[t][x] $ is  irreducible and  separable over $k(t)$ of degree $d>2$ in $x$ and $p\nmid d(d-1)$.
Then the group $\mathrm{G}=\mathrm{Gal}(f(t,x), k(t))\cong S_d.$
\end{proposition}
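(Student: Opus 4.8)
The plan is to reduce the statement directly to Theorem~\ref{[S_n]}. All of that theorem's hypotheses are already present here: the polynomial $f(t,x)=x^{d}+a(t)x^{d-1}+b(t)$ is monic in $x$; its degree is $d>2$ by assumption; it is assumed irreducible and separable over $k(t)$; $\mathrm{char}(k)=p\neq 2$ is the standing hypothesis of this section; and $p\nmid d(d-1)$ is assumed. So the one-line argument is: apply Theorem~\ref{[S_n]} to conclude $\mathrm{G}=\mathrm{Gal}(f(t,x),k(t))\cong S_d$. The role of the Proposition is simply to exhibit an explicit, abundant family of polynomials over $\F_q(t)$ to which the theorem applies, illustrating \eqref{2indet}.

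For a more self-contained account I would re-run the mechanism of \S\ref{doubly} in this concrete case. First compute $f^{\prime}(t,x)=x^{d-2}\bigl(d\,x+(d-1)a(t)\bigr)$; since $p\nmid d(d-1)$ this is not identically zero, and its zeros in $\overline{k(t)}$ are $x=0$, of multiplicity $d-2$, and the simple critical point $x=-(d-1)a(t)/d$. Hence the only primes of $k(t)$ that can ramify in the splitting field $\F$ of $f(t,x)$ are the pole $P_\infty$ of $t$ and the prime divisors of $\Delta_f(t)=\mathrm{disc}_x(f(t,x))$; the latter is an explicit polynomial in $a(t)$ and $b(t)$ (obtained, e.g., via $x\mapsto 1/y$ as the discriminant of the trinomial $b(t)y^{d}+a(t)y+1$), and it is nonzero precisely because $f$ is separable. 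Then: transitivity of $\mathrm{G}$ follows from irreducibility of $f$; at $P_\infty$ the ramification is tame (as $p\nmid d$) with inertia a $d$-cycle and $Q_\infty$ the unique place above it; at each finite ramified prime $P_c=t-c$, separability of $f(t,x)$ forbids a double root in any extension, forcing $e(\mathfrak{p}\mid P_c)<3$, so by Hensel's Lemma $f(t,x)$ splits over the completion into one quadratic and $d-2$ linear factors and the inertia at $\mathfrak{p}$ is generated by a transposition; throwing away the root $x=\alpha_1$ exactly as in \S\ref{doubly} shows $\mathrm{G}$ is doubly transitive, hence primitive; and a primitive subgroup of $S_d$ containing a transposition is all of $S_d$.

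The point worth flagging — and the reason one cannot simply invoke the classical Morse criterion (Theorem~\ref{jpth}) — is that for $d>3$ the trinomial $x^{d}+a(t)x^{d-1}+b(t)$ is \emph{not} a Morse function over $k(t)$: the critical point $x=0$ has multiplicity $d-2\geq 2$, so $f^{\prime\prime}$ vanishes there and condition~(1) of the definition of Morse fails. Theorem~\ref{[S_n]} sidesteps this because its proof never uses non-degeneracy of the critical points, only separability together with the elementary fact that separability caps the ramification index of a finite prime below $3$. (When $d=3$ the polynomial \emph{is} Morse as soon as its two critical values $b(t)$ and $f(t,-\tfrac{2}{3}a(t))$ are distinct, and then Theorem~\ref{jpth} applies directly.) So I do not anticipate any genuine computational obstacle; the only thing to get right is the conceptual one: that no Morse hypothesis is needed, and that is already built into Theorem~\ref{[S_n]}.
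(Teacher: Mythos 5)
Your first route (just invoke Theorem \ref{[S_n]}) is formally consistent with the paper's framework, but it is not what the paper does for this proposition, and the justification you attach to it repeats the error of your second route. Note also a mismatch of intent: the paper's own proof of Proposition \ref{su_2in} analyses $x^{d}+a(t)x+b(t)$, the Morse trinomial illustrating \eqref{2indet} (Proposition \ref{S_n:fd} again writes $f(t,x)=x^{d}+a(t)x+b(t)$ ``under the hypothesis of Proposition \ref{su_2in}''), and its argument is exactly the Morse inertia computation you say cannot be used: all critical points simple, each finite ramified prime tamely ramified with inertia a transposition, a $d$-cycle at $P_\infty$, then transitive $+$ transpositions $+$ primitivity gives $S_d$. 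The genuinely non-Morse trinomial $x^{d}+a(t)x^{d-1}+b(t)$ is treated separately in Proposition \ref{eqnox_1}, under the additional hypothesis that $d$ is odd. So, taking the statement literally as you did, you are in the situation of Proposition \ref{eqnox_1}, not of the paper's proof of Proposition \ref{su_2in}.

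The genuine gap is the step ``separability forbids a double root in any extension, so $e(\mathfrak{p}\mid P_c)<3$ and the inertia at every finite ramified prime is a transposition.'' Separability of $f(t,x)$ over $k(t)$ only says its roots in an algebraic closure of $k(t)$ are distinct; it puts no bound on the multiplicities of roots of the specialized polynomial $f(c,x)\in k[x]$. For the stated trinomial this visibly fails: at a prime $P_c$ with $b(c)=0$, $a(c)\neq 0$, the specialization is $x^{d-1}\bigl(x+a(c)\bigr)$, so $d-1$ roots collide, the Newton polygon gives $x\sim (t-c)^{1/(d-1)}$, and the (tame, since $p\nmid d-1$) inertia is generated by a $(d-1)$-cycle, not a transposition. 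This is precisely the effect of the degenerate critical point $x=0$ that you correctly flagged, and it is why Proposition \ref{eqnox_1} argues with a transposition coming from the simple critical point $\gamma=\tfrac{1-d}{d}a(t)$ together with a $(d-1)$-cycle coming from $x=0$, uses [Lemma 4.4.3, \cite{JP}] to get double transitivity, and assumes $d$ odd to keep the $(d-1)$-cycle outside $A_d$. The same false bound $e<3$ sits inside the proof of Theorem \ref{[S_n]} itself, so ``apply Theorem \ref{[S_n]}'' does not sidestep the difficulty but merely relocates it; your remark that the theorem ``only uses separability to cap the ramification index below $3$'' is exactly the claim that breaks. To repair your self-contained argument: drop the $e<3$ claim, compute the inertia separately at the zeros of $b(t)$ (a $(d-1)$-cycle) and at the zeros of the critical value $f(t,\gamma)$ (a transposition, when such a prime exists and is distinct from the zeros of $b(t)$), and conclude via transitive $+$ $(d-1)$-cycle $\Rightarrow$ doubly transitive $\Rightarrow$ primitive, then primitive $+$ transposition $\Rightarrow S_d$.
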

\begin{proof}
Let $L$ be the splitting field of $x^{d}+a(t)x+b(t)$ over $k(t)$. By our hypothesis on $f(t,x)$,
the  group $
\mathrm{Gal}(x^d+a(t)x+b(t),k(t))$ is a transitive subgroup of  $S_d$.
  The function $x^d+a(t)x$ is Morse, hence the zeros of derivative $f^{\prime}(t,x)=dx^{d-1}+a(t)$ have multiplicity one.
Let
 $\gamma_1,\dots,\gamma_{d-1}$ be the  $d-1$ distinct roots of $f^{\prime}({t,x})=0$ in characteristic $p$.
  The corresponding ramification points 
  in characteristic $p$
  are $f(t,\gamma_1),\dots,f(t,\gamma_{d-1})$.
  At each $f(t,\gamma_i)$, the ramification index 
 $e(Q_{\gamma_i}|P_{f(t,\gamma_i)})$ of any prime $Q_{\gamma_i}$ of $L$ dividing the prime $P_{f(t,\gamma_i)}$ of $k[t]$ is cyclic of order $2$. Therefore the   inertia group $I_{\gamma_i}$ is tame, generated by a transposition, hence not contained in the alternating group $A_d$. 
The ramification index of any prime $Q_{\infty}$ of $L$ dividing $P_{\infty}$ is  $d$, i.e., $e(Q_{\infty}|P_{\infty})=d$. The corresponding inertia group $I_1$ is tame,  generated by an $d$ cycle.
The group $\mathrm{G}=\mathrm{Gal}(x^{d}+a(t)x+b(t), k(t))$ is a transitive subgroup of $S_d$ generated by transpositions.
Therefore by, [Lemma 4.4.4, \cite{JP}]   $\mathrm{G}$ is  primitive. So, $\mathrm{G}$ is a primitive permutation group containing transpositions. Hence $\mathrm{G}\cong S_d$.
\end{proof}
\begin{proposition}\label{S_n:fd}
 Under  the hypothesis of Proposition \ref{su_2in}, and with the above notations, the Galois group of the $n$-th iterate of $f(t,x)=x^{d}+a(t)x+b(t)$ over $k(t)$ is $[S_d]^{n}$ for all $n\geq 1,$ i.e.,
 \begin{displaymath}
  \mathrm{Gal}(f^{\circ n}(t,x),k(t))\cong 
  [S_d]^n \text{ for all } n\geq 1.
 \end{displaymath}
\end{proposition}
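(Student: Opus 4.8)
The plan is to deduce the statement from Theorem \ref{S_n:ff} by verifying its hypotheses for $f(t,x)=x^{d}+a(t)x+b(t)$. Proposition \ref{su_2in} already provides all of them except the one about critical points: $f$ is monic and irreducible in $x$ of degree $d>2$, separable over $k(t)$, we have $p\neq 2$ and $p\nmid d(d-1)$, and $\mathrm{Gal}(f(t,x),k(t))\cong S_d$. So the remaining task is to exhibit a critical point $a$ of multiplicity one whose forward orbit does not collide, in the sense demanded by Theorem \ref{S_n:ff}, with the forward orbits of the critical points of $f$.

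First I would record the critical data. Since $x^{d}+a(t)x$ is Morse, $f'(t,x)=dx^{d-1}+a(t)$ has $d-1$ simple zeros $\gamma_{1},\dots,\gamma_{d-1}\in\Omega$ with $\gamma_{i}^{d-1}=-a(t)/d$, and as $p\nmid d-1$ we may write $\gamma_{i}=\zeta^{i}\gamma_{1}$ for a primitive $(d-1)$-th root of unity $\zeta\in k$. Thus every critical point has multiplicity one, and using $\gamma_{i}^{d-1}=-a(t)/d$ the corresponding critical values are
\[
 v_{i}:=f(t,\gamma_{i})=\frac{d-1}{d}\,a(t)\,\gamma_{i}+b(t),\qquad 1\le i\le d-1,
\]
which are pairwise distinct since $a(t)\neq 0$.

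Next I would prove the non-collision property by a valuation argument at the pole $P_{\infty}$ of $t$. Fix a place $Q_{\infty}$ of $\Omega$ above $P_{\infty}$ at which $\gamma_{1}$ has a pole (this exists because $\deg a\ge 1$); then every $\gamma_{i}$, hence every $v_{i}$, has a pole there, of an order independent of $i$ since the $\gamma_{i}$ differ by units of $k$. Because $f(t,x)=x^{d}+(\text{terms of lower degree in }x)$, any $w$ with $v_{Q_{\infty}}(w)$ sufficiently negative satisfies $v_{Q_{\infty}}(f(t,w))=d\,v_{Q_{\infty}}(w)$; iterating, $v_{Q_{\infty}}(f^{\circ m}(t,\gamma_{i}))$ stays negative and is multiplied by $d$ at each step. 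Comparing pole orders at $Q_{\infty}$ then forces $l=m$ in any equality $f^{\circ l}(t,\gamma_{j})=f^{\circ m}(t,\gamma_{i})$; in particular each $\gamma_{j}$ is wandering. To exclude the remaining case $l=m$ with $i\neq j$, I would pass to the least $r\ge 1$ at which the two orbits merge, producing distinct elements $u_{i}=f^{\circ (r-1)}(t,v_{i})\neq u_{j}=f^{\circ (r-1)}(t,v_{j})$ with $f(t,u_{i})=f(t,u_{j})$, hence $\sum_{s=0}^{d-1}u_{i}^{s}u_{j}^{d-1-s}+a(t)=0$, and I would derive a contradiction from the divisor of $v_{i}-v_{j}=(\zeta^{i}-\zeta^{j})\tfrac{d-1}{d}\,a(t)\,\gamma_{1}$ at the finite places. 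With the critical-point hypothesis in hand, Theorem \ref{S_n:ff} applies and yields $\mathrm{Gal}(f^{\circ n}(t,x),k(t))\cong[S_{d}]^{n}$ for all $n\ge 1$.

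The hard part, I expect, is precisely this last exclusion of a merge between the orbits of two distinct but Galois-conjugate critical points: the valuation at $P_{\infty}$ does not separate them, so one is forced to exploit the finite ramified places, in the same spirit as the coprimality and squarefreeness of the discriminants of the factors $f(t,x)-a_{i}$ used in Propositions \ref{lin_sep} and \ref{final-prop-lin}. If one is content to conclude only for \emph{most} admissible pairs $(a(t),b(t))$ of bounded degree in $\F_q[t]$ — the point of view of the surrounding section — this step becomes routine, since for each iteration depth the colliding pairs lie in a proper closed subset of the finite parameter space.
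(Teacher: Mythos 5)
Your overall strategy --- reduce to Theorem \ref{S_n:ff} by checking its critical-orbit hypothesis, the other hypotheses being supplied by Proposition \ref{su_2in} --- is the right one, and your computation of the critical data is fine: the $d-1$ critical points $\gamma_i$ are simple, $\gamma_i=\zeta^i\gamma_1$, and the critical values $v_i=\tfrac{d-1}{d}a(t)\gamma_i+b(t)$ are pairwise distinct. The pole-order argument at $Q_\infty$ is also a reasonable way to kill collisions $f^{\circ l}(t,\gamma_j)=f^{\circ m}(t,\gamma_i)$ with $l\neq m$, since once the valuation is sufficiently negative it is multiplied by $d$ at each step. But the proof is not complete: the hypothesis of Theorem \ref{S_n:ff} also excludes equal-time collisions, $f^{\circ l}(t,\gamma_i)=f^{\circ l}(t,\gamma_j)$ with $i\neq j$, and this is exactly the step you leave open. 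Your sketch (pass to the first merge, get $u_i\neq u_j$ with $f(t,u_i)=f(t,u_j)$, hence $\sum_{s=0}^{d-1}u_i^su_j^{d-1-s}=-a(t)$, then ``derive a contradiction from the divisor of $v_i-v_j$ at the finite places'') is not carried out, and it is not evident that it can be: the quantity $\sum u_i^su_j^{d-1-s}$ has huge pole order at infinity and its finite divisor is not controlled by $v_i-v_j$ in any obvious way, so some genuinely new input (e.g.\ a primitive-prime-divisor or ramification argument along the critical orbit) is needed here. You acknowledge this yourself, and your fallback conclusion (``most'' pairs $(a(t),b(t))$, for each fixed iteration depth) is strictly weaker than the proposition, which asserts the result for every admissible pair and every $n$; moreover ``a proper closed subset for each depth'' does not by itself give a single pair working for all $n$.

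For comparison, the paper does not argue at the infinite place at all: it notes that the $d-1$ critical points are simple and then runs the argument of Proposition \ref{sub_2indet}, i.e.\ it works at the finite primes of $k[t]$, producing for the critical orbit squarefree \emph{primitive} prime divisors of $f^{\circ n}(t,\gamma)$ (primes dividing the $n$-th critical value but no earlier one, and coprime to the relevant discriminant data), and uses these together with Propositions \ref{lin_sep}--\ref{final-prop-lin} to get maximality of $K_n/K_{n-1}$. That finite-place mechanism is precisely what substitutes for the orbit-separation step you are missing, so if you want to complete your write-up you should either import that primitive-prime-divisor argument (adapted to the $d-1$ conjugate critical points $\gamma_i$, which is where the real work lies) or find an honest proof of the equal-time non-collision; as it stands the decisive step of the proposition is asserted but not proved.
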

\begin{proof}
We may note that the $d-1$ critical points of $f(t,x)$ are distinct and have multiplicity one. Along with this fact, rest of the the proof is identical to 
Proposition \ref{sub_2indet}, below.  Therefore,
$K_{n}/K_{n-1}$ is maximal for all $n\geq 1.$
\end{proof}
\subsection{Non-Morse polynomials}\label{non_mors}
Let us consider an example of a polynomial in $\F_q[t][x]$ which has only one critical point of multiplicity one. 
 Suppose $k$ is an algebraically closed field of characteristic $p> 0$. Cohen \cite{coh} proved that, given two indeterminates $S$ and $T$ the
 Galois group of the generic trinomial $x^n+Sx^{a}+T$ over $k(S,T)$ is $S_n$ unless $p$ divides $n(n-1)$, $\mathrm{gcd}(n,a)=1$, ($n>a>0$) and  $a=1$ or $n-1$. 
 Hence,
\begin{equation}\label{hol}
  \mathrm{Gal}(x^{n}+Sx^{n-1}+T,k(S,T))\cong S_n.
\end{equation}
Therefore for $k=\overline{\F}_q$, the group
\begin{displaymath}
 \mathrm{Gal}(x^{d}+Sx^{d-1}+T,k(S,T))\cong S_d
\end{displaymath}
Applying a classical result due to Debes and Legrand [Lemma 4.2 \cite{pf}],  there exists infinitely many specializations in $\F_q[t]$  for $(S,T)\mapsto (a(t), b(t))\in \F_q[t]^{2}$ such that,
\begin{equation}\label{eqnox}
 \mathrm{Gal}(x^{d}+a(t)x^{d-1}+b(t),k(t))\cong S_d.
\end{equation}
We  prove (\ref{eqnox}) and establish Odoni's conjecture in this case.
\begin{proposition}\label{eqnox_1}
 Let $a(t),b(t)$ be irreducible polynomials of bounded degree in
 $\F_q[t]$ such that
$f(t,x)= x^{d}+a(t)x^{d-1}+b(t)\in  \F_q[t][x]$ is   irreducible and  separable over $k(t)$ of degree $d>2$ in $x$. Suppose  $d$ is an odd integer and $p\nmid d(d-1)$.
Then the group $\mathrm{Gal}(f(t,x), k(t))\cong S_d.$
\end{proposition}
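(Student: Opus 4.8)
The plan is to identify $G:=\mathrm{Gal}(f(t,x),k(t))$ by reading off the inertia subgroups of the splitting field $\F$ of $f$ over $k(t)$, as in the proofs of Theorem \ref{[S_n]} and Proposition \ref{su_2in}, and then to conclude by a short permutation-group argument. Since $f$ is irreducible over $k(t)$, $G$ is a transitive subgroup of $S_d$; and since $k$ is algebraically closed, $G$ is generated by the inertia subgroups at the ramified primes of $k(t)$. From $f'(t,x)=x^{d-2}\bigl(dx+(d-1)a(t)\bigr)$ one sees that $f$ has a single critical point $\gamma(t)=-(d-1)a(t)/d$ of multiplicity one together with the critical point $x=0$ of multiplicity $d-2$ (the non-Morse feature). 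A resultant computation, using $p\nmid d$, gives $\mathrm{disc}_x(f)=b(t)^{d-2}h(t)$ up to a nonzero constant, where $h(t)=f\bigl(t,\gamma(t)\bigr)=\tfrac{(d-1)^{d-1}}{d^{d}}a(t)^{d}+b(t)$ (the sign simplifies because $d$ is odd); hence the finite branch points of $\F/k(t)$ lie among the zeros of $b(t)h(t)$, and $P_\infty$ is the unique branch point at infinity.

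First I would treat the fibre over a zero $c$ of $b(t)$. Because $b$ is irreducible over $\F_q$ it is separable, so $c$ is a simple zero; and $a(c)\neq 0$ since the distinct irreducibles $a$ and $b$ are coprime. Thus $f(c,x)=x^{d-1}\bigl(x+a(c)\bigr)$, so the Newton polygon of $f$ at $t=c$ has one segment of slope $-1/(d-1)$ carrying the $d-1$ roots that collapse to $0$; as $p\nmid d-1$, these roots form a single tamely ramified orbit, and the inertia group at a prime of $\F$ above $P_c$ is cyclic of order $d-1$ and acts on the roots of $f$ as a $(d-1)$-cycle. So $G$ contains a $(d-1)$-cycle. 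Next, at a simple zero $c'$ of $h(t)$ with $b(c')\neq 0$, one checks $f''(t,\gamma(t))$ does not vanish there, so $\gamma(c')$ is an ordinary double root of $f(c',x)$ and all other roots stay simple; since $p\neq 2$ the ramification is tame and the inertia group is generated by a transposition, so $G$ contains a transposition. Finally $P_\infty$ is tamely and totally ramified with inertia generated by a $d$-cycle, exactly as recorded in the proof of Theorem \ref{[S_n]}.

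The rest is permutation-group theory. A transitive group containing a $(d-1)$-cycle is $2$-transitive: the cycle fixes a point $\omega$ and acts transitively on the remaining $d-1$ points, so $G_\omega$ is transitive there, and transitivity of $G$ promotes this to $2$-transitivity; in particular $G$ is primitive. A primitive subgroup of $S_d$ containing a transposition is $S_d$ by Jordan's theorem [cf.\ Lemma 4.4.4, \cite{JP}], whence $G\cong S_d$. The hypothesis that $d$ is odd makes the argument robust: $d-1$ is then even, so a $(d-1)$-cycle is an odd permutation and $G\not\subseteq A_d$; combining this with primitivity, the $d$-cycle, and the classification of primitive groups containing a cycle of length $n-1$, the only possibilities are $G=S_d$ and, when $d$ is prime, $G=\mathrm{AGL}_1(d)$, and the latter contains no transposition once $d\geq 5$ while $\mathrm{AGL}_1(3)=S_3$.

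I expect the main obstacle to be the local analysis rather than the group theory. Verifying that the inertia at a zero of $b(t)$ is exactly a $(d-1)$-cycle (and that at $P_\infty$ exactly a $d$-cycle) requires the Newton-polygon and tameness bookkeeping sketched above, and one must separately dispose of the degenerate case in which $h(t)$ has no usable simple zero — which is forced precisely by $b(t)=-\tfrac{(d-1)^{d-1}}{d^{d}}a(t)^{d}+\text{const}$ — where the needed transposition has to be recovered from the ramification over $P_\infty$ instead. The hypotheses $p\neq 2$ and $p\nmid d(d-1)$ are exactly what keeps all of this ramification tame, so that every inertia group is cyclic and the cycle-type reading above is valid.
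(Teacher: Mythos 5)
Your proposal follows essentially the same route as the paper's proof: compute $f'(t,x)=x^{d-2}\bigl(dx+(d-1)a(t)\bigr)$, read off the inertia groups at the zeros of $b(t)$ (a tame $(d-1)$-cycle), at the zeros of $f(t,\gamma(t))$ (a transposition), and at $P_\infty$, and then conclude via double transitivity from the $(d-1)$-cycle plus a transposition that $\mathrm{Gal}(f,k(t))\cong S_d$. Your write-up is in fact more careful than the paper's (explicit discriminant and Newton-polygon bookkeeping, and flagging the degenerate case where $f(t,\gamma(t))$ has no usable simple zero, which the paper passes over silently), but it is the same argument, so no further comparison is needed.
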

\begin{proof}
Let $L$ be the splitting field of $f(t,x)=x^{d}+a(t)x^{d-1}+b(t)$ over $k(t)$. By our hypothesis on $f(t,x)$ the group $\mathrm{Gal}(f(t,x),k(t))$ is a transitive subgroup of $S_d$.
The ramified primes of the Galois extension $L/k(t)$ are the pole $P_{\infty}$ of $t$ and the places corresponding to the zeros of the derivative of $f(t,x)$ $\bmod{p}$.
The derivative \begin{equation}\label{den}
f^{\prime}(t,x)=dx^{d-2}(x+\frac{d-1}{d}a(t))\end{equation}
In characteristic $p$, the ramification points  are $f(t,0)$ and $f(t,\gamma)$ where, $\gamma=\frac{1-d}{d}a(t)\bmod{p}$.
 At $f(t,0)$, the ramification index $e(Q_{0}|P_{f(t,0)})=d-1$. Therefore the inertia group $I_1$ is tame, of order $(d-1)$. Since $d$ is an odd ineger, $I_1$ is not contained in $A_d$.
 At $f(t,\gamma)$, the ramification index $e(Q_{\gamma}|P_{f(t,\gamma)})=2$, hence,   the  inertia group $I_2$ is tame, generated by a transposition, hence not contained in $A_d$.
The ramification index of any prime $Q_{\infty}$ of $L$ dividing $P_{\infty}$ is  $d$, $Q_{\infty}$ is  totally tamely ramified. 
Thus the group $\mathrm{G}=\mathrm{Gal}(x^{d}+a(t)x^{d-1}+b(t), k(t))$ is a transitive subgroup of $S_d$ which contains a transposition  and  a $(d-1)$ cycle.
By [Lemma 4.4.3, \cite{JP}],  $\mathrm{G}$ is doubly transitive, hence primitive. Thus, $\mathrm{G}$ is a primitive permutation group containing transposition, hence $\mathrm{G}\cong S_d$.
\end{proof}
\begin{proposition}\label{sub_2indet}
 Under the hypothesis of Proposition \ref{eqnox_1},  the Galois group of the $n$-th iterate of $f(t,x)=x^{d}+a(t)x^{d-1}+b(t)$ over $k(t)$ is $[S_d]^{n}$ for all $n\geq 1$, i.e.,
\begin{equation}\label{req}
 \mathrm{Gal}(f^{\circ n}(t,x), k(t))\cong [S_d]^{n} \text{ for all } n\geq 1.
\end{equation}
\end{proposition}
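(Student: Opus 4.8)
The plan is to verify the hypotheses of Theorem~\ref{S_n:ff} for $f(t,x)=x^{d}+a(t)x^{d-1}+b(t)$; the base case $\mathrm{Gal}(f(t,x),k(t))\cong S_d$ is precisely Proposition~\ref{eqnox_1}. First I would pin down the critical locus. Since $p\nmid d(d-1)$,
\[
f^{\prime}(t,x)=dx^{d-2}\Bigl(x+\tfrac{d-1}{d}\,a(t)\Bigr),
\]
so $f$ has exactly one simple critical point $\gamma=-\tfrac{d-1}{d}\,a(t)\in k(t)$ together with the critical point $0$ of multiplicity $d-2$, with critical values $v_\gamma:=f(t,\gamma)=c_0\,a(t)^{d}+b(t)$, where $c_0=(-1)^{d-1}(d-1)^{d-1}/d^{d}\ne 0$, and $v_0:=f(t,0)=b(t)$. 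Taking $a=\gamma$ as the distinguished simple critical point, the remaining content of the hypothesis of Theorem~\ref{S_n:ff} is the orbit condition: $f^{\circ l}(t,\gamma)\ne f^{\circ m}(t,\gamma)$ for $l<m$, and $f^{\circ l}(t,\gamma)\ne f^{\circ m}(t,0)$ for $1\le l\le m$.

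I would establish the orbit condition by counting degrees in $t$ over the algebraically closed field $k$. One has $v_\gamma\ne v_0$, since $v_\gamma=v_0$ would force $a(t)\equiv0$, impossible for an irreducible polynomial; and $v_\gamma,v_0$ are non-constant, since $v_0=b(t)$ is, and $e-c_0a(t)^{d}$ is reducible of degree $d\deg a\ge d\ge3$ for every constant $e$, hence cannot equal the irreducible $b(t)$. Once an iterate value $w$ satisfies $\deg_t w>\max(\deg a,\deg b)$ its $d$-th power dominates $f(t,w)$, so $\deg_t f^{\circ j}$ grows geometrically and the values $f^{\circ j}(t,\cdot)$ become pairwise distinct; the finitely many earlier iterates, and the finitely many degree profiles $(\deg a,\deg b)$ for which these comparisons are not at once decisive (e.g.\ $\deg b\ge d\deg a$, where the leading terms may briefly agree), are handled by tracking the degree of $f^{\circ j}(t,\gamma)-f^{\circ j}(t,0)$ and showing it stays positive. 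I expect this step --- showing the two critical values have wandering, mutually disjoint forward orbits, so that the non-Morse behaviour at $0$ cannot force the discriminants of different iterates to share a factor --- to be the main obstacle, and it is here, together with the square-class computation below, that the hypotheses ``$d$ odd'' and ``$a(t),b(t)$ of bounded degree'' are genuinely used.

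With the orbit condition in hand, the rest follows the template of Section~\ref{prf_th_[sn]}. Capelli's Lemma~\ref{capl} together with Propositions~\ref{lin_sep} and \ref{pal1} --- which use only $\mathrm{Gal}(f(t,x),k(t))\cong S_d$, separability, and the transcendence over $k$ of the roots $a_1,\dots,a_{d^{n-1}}$ of $f^{\circ n-1}(t,x)$ --- show that $f^{\circ n}(t,x)$ is monic, irreducible and separable of degree $d^{n}$ and that $K_n/K_{n-1}$ is the compositum of the extensions $M_i'=M_iK_{n-1}$ with $\mathrm{Gal}(M_i'/K_{n-1})\cong S_d$. For their linear disjointness over $K_{n-1}$ I would run the analogue of Proposition~\ref{final-prop-lin}: from
\[
\mathrm{disc}_x\bigl(f(t,x)-c\bigr)=(\text{unit in }k)\cdot(b(t)-c)^{\,d-2}\,\bigl(f(t,\gamma)-c\bigr)
\]
and the facts that the unit is a square in $k=\overline{\F}_q$ and that $d-2$ is odd (so $(b(t)-c)^{d-2}$ equals $b(t)-c$ times a square), the quadratic subextension of $M_i'/K_{n-1}$ is $K_{n-1}\bigl(\sqrt{(b(t)-a_i)(f(t,\gamma)-a_i)}\bigr)$; since $\prod_i(b(t)-a_i)=f^{\circ n}(t,0)$ and $\prod_i(f(t,\gamma)-a_i)=f^{\circ n}(t,\gamma)$, the orbit condition makes every zero of $f^{\circ n}(t,0)$ and of $f^{\circ n}(t,\gamma)$ a primitive prime divisor, hence unramified in $K_{n-1}$, which yields the linear disjointness. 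Then Proposition~\ref{rel-max} gives $\mathrm{Gal}(K_n/K_{n-1})\cong S_d^{d^{n-1}}$, and the degree count $[K_n:k(t)]=(d!)^{d^{n-1}}\cdots(d!)^{d}\cdot d!=|[S_d]^{n}|$ gives $\mathrm{Gal}(f^{\circ n}(t,x),k(t))\cong[S_d]^{n}$ for all $n\ge1$.
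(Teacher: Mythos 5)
Your overall route is the same as the paper's: use Proposition~\ref{eqnox_1} as the base case, verify the critical-orbit hypothesis of Theorem~\ref{S_n:ff} for the simple critical point $\gamma=-\tfrac{d-1}{d}a(t)$, and then rerun the machinery of Propositions~\ref{lin_sep}--\ref{rel-max}. Your explicit square-class computation --- $\mathrm{disc}_x(f(t,x)-c)$ equals $(b(t)-c)^{d-2}\bigl(f(t,\gamma)-c\bigr)$ up to a nonzero constant, which is a square in $k=\overline{\F}_q$, together with $\prod_i\bigl(b(t)-a_i\bigr)=f^{\circ n}(t,0)$ and $\prod_i\bigl(f(t,\gamma)-a_i\bigr)=f^{\circ n}(t,\gamma)$ --- is correct and in fact makes the role of ``$d$ odd'' more transparent than the paper's treatment does.

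However, the step you yourself call the main obstacle is where the proposal has a genuine gap, and the sketch you give for it cannot be completed. First, the claim that $e-c_0a(t)^{d}$ is reducible for every constant $e$ is false over $\F_q$: with $d=3$, $p=7$, $a(t)=t$ one has $c_0=3$ and $6-3t^{3}=-3(t^{3}-2)$ is irreducible (2 is not a cube mod 7), so non-constancy of $v_\gamma$ does not follow this way. Second, and more seriously, the separation $f^{\circ l}(t,\gamma)\ne f^{\circ m}(t,0)$ is not a consequence of the hypotheses of Proposition~\ref{eqnox_1} at all: the choice $b(t)=-\tfrac{d-1}{d}\,a(t)$ is irreducible (a nonzero scalar multiple of $a$, as $p\nmid d-1$), and e.g.\ for $a(t)=t$ the polynomial $f$ is still irreducible and separable over $k(t)$; but then $b(t)=\gamma$, so $f(t,\gamma)=f\bigl(t,b(t)\bigr)=f^{\circ 2}(t,0)$ and the orbit condition of Theorem~\ref{S_n:ff} fails with $l=1$, $m=2$. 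Hence no degree-tracking argument can ``show it stays positive''; the hypotheses would have to be strengthened. Finally, even where the iterate values are distinct polynomials, your inference that every zero of $f^{\circ n}(t,0)$ and of $f^{\circ n}(t,\gamma)$ is a primitive prime divisor is a non sequitur (distinct polynomials can share prime factors), and the linear-disjointness step also needs these divisors to occur to odd multiplicity. To be fair, the paper's own proof handles exactly these points by assertion --- it restricts to primes $\mathfrak{p}_n$ with $\gcd(\mathfrak{p}_n,b(t))=1$ and declares $f^{\circ n}(t,\gamma)$ squarefree and its divisors primitive without argument --- so you have correctly located the crux; but as written your proposal does not close it, and under the literal hypotheses it cannot be closed.
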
  
\begin{proof}
We proceed by induction. The case $n=1$ follows immediately. Suppose that the results holds true for $n-1$, i.e,   the group $\mathrm{Gal}(f^{\circ n-1}(t,x),k(t))\cong [S_d]^{n-1}.$ 
By the inductive  hypothesis and using Capelli's Lemma,  $f^{\circ n}(t,x)$ is irreducible and separable over $k(t)$ for all $n\geq 1$. 
To complete the proof it suffices to  show that, for  the unique  critical point $\gamma$ of $f(t,x)$, any nonzero prime $\mathfrak{p}_n$ of $k[t]$ dividing $f^{\circ n}(t,\gamma)$ is a squarefree primitive prime divisor which does not ramify in $K_{n}$ for $n\geq 1$. 
The critical point $\gamma=\frac{d-1}{d}a(t)$  of $f(t,x)=x^{d}+a(t)x^{d}+b(t)$
in the algebraic closure $\Omega$ of $\F_q(t)$ has  multiplicity one and $f^{\circ n}(t,\gamma)\neq 0$ for all $n\geq 1$.
\paragraph{}
Let  $\mathfrak{p}_{n}$ be  any prime of $\F_q[t]$ that divides $f^{\circ n}(t,\gamma)$  such that $\mathrm{gcd}(b(t),\mathfrak{p}_n)=1$. 
 Thus,
$\mathfrak{p}_n$ does not divide any of $f^{\circ m}(t,0)$ for all $m\geq 1$  and, $f^{\circ m}(t,\gamma)\neq f^{\circ l}(t,0)$ for any
$m,l\leq n$.
But,
$f^{\circ n}(t,\gamma)=f^{\circ n-1}(f(t,\gamma))$ is squarefree. Therefore,   $\mathfrak{p}_n$ is a square free prime divisor of  $f^{\circ n}(t,\gamma)$. 
We know  that  $K_n$ is obtained from $K_{n-1}$ by adjoining the roots of $f(t,x)-a_i$, for each root $a_i$ of $f^{\circ n-1}(t,x)=0$, for $1\leq i\leq d^{n-1}$ and,
\begin{equation}\label{cheker_2}
  f^{\circ n}(t,x)=  f^{\circ n-1}(f(t,x))=\prod_{i=1}^{d^{n-1}}f(t,x)-a_i
\end{equation}
Therefore by (\ref{cheker_2}), 
  $f^{\circ n}(t,\gamma)$  and $f^{\circ m}(t,\gamma)$  are relatively prime for all $ 0<m<n$.
 Thus
 $\mathfrak{p}_n$ is a squarefree primitive prime divisor of $f^{\circ n}(t,\gamma)$. Now, it remains to show that $\mathfrak{p}_n$ does not ramify in $K_{n}$. For $i\neq j$,  the discriminants $\mathrm{disc}_x(f(t,x)-a_i)$ and 
$\mathrm{disc}_x(f(t,x)-a_j)$ are relatively prime in $K_{n-1}[x]$. 
Suppose the 
prime $\mathfrak{q}_i$ of $K_{n-1}$ extends  to $f(t,\gamma)-a_i$
in $K_n$. The prime $\mathfrak{q}_i$ is relatively prime with the  prime 
$\mathfrak{q}_j$ of $K_{n-1}$ which extends to the ideal $f(t,\gamma)-a_j$
in $K_n$ for $i\neq j$, hence does not ramify in the splitting field of $f(t,x)-a_j$ for $i\neq j$.
\paragraph{} Suppose $\mathfrak{q}$ be any nonzero prime ideal of $K_{n-1}$ lying above $\mathfrak{p}_n$ and the lifting of $\mathfrak{q}$ to $K_{n}$ be the ideal $f(t,\gamma)-a_i$ in $K_n$. 
It is immediate that
$\mathfrak{q}$ does not ramify in the splitting field of $f(t,x)-a_j$ for $i\neq j.$
Therefore the inertia group of every prime ideal of $K_n$ lying over $\mathfrak{q}$ is trivial. 
Since, each such $\mathfrak{q}$ lying over $\mathfrak{p}_n$ does not ramify in $K_n$, it follows that $\mathfrak{p}_n$ does not ramify in $K_n$.
Thus  the polynomial $f(t,x)=x^{d}+a(t)x^{d-1}+b(t)$ satisfies the hypothesis of Theorem 
\ref{S_n:ff}. 
Therefore, 
$K_{n}/K_{n-1}$ is maximal for all $n\geq 1$. This completes the proof.
\end{proof}

\bibliographystyle{plain}

\end{document}